\documentclass[10pt]{amsart}

	\usepackage[pagebackref,  pdfpagemode=FullScreen,  colorlinks=true]{hyperref}	   
	
	\usepackage{graphicx, subfig, color, verbatim, calc}

	\newlength{\len}		
	\setlength{\len}{6.25in}
	
	\newlength{\martot}		
	\setlength{\martot}{6.5in - \len}

	\newlength{\mar}		
	\setlength{\mar}{.5\martot}
	
	\setlength{\textwidth}{\len}
	\setlength{\oddsidemargin}{\mar}	
	\setlength{\evensidemargin}{\mar}

	\newtheorem{thm}{Theorem}
  	\newtheorem{lem}{Lemma}
  	\newtheorem{prop}{Proposition}

	\theoremstyle{definition}
  	
	\newtheorem*{ex}{Example}

	\theoremstyle{remark}
  	\newtheorem{rem}{Remark}
	
	\newcommand{\MM}{\mathcal M}
	\newcommand{\BM}{\overline{\mathcal M}}

	\makeatletter
	\@namedef{subjclassname@2010}{%
	\textup{2010} Mathematics Subject Classification}
	\makeatother

\begin{document}

\title[Extremality of loci of hyperelliptic curves with marked Weierstrass points]{Extremality of loci of hyperelliptic curves\\ with marked Weierstrass points}

\author{Dawei Chen}
\address{Department of Mathematics, Boston College, Chestnut Hill, MA 02467}
\email{dawei.chen@bc.edu}

\author{Nicola Tarasca}
\address{Department of Mathematics, University of Utah, Salt Lake City, UT 84112}
\email{tarasca@math.utah.edu}

\subjclass[2010]{14H99 (primary),  14C99 (secondary)}
\keywords{Subvarieties of moduli spaces of curves, effective cones, higher codimensional cycles}

\thanks{During the preparation of this article the first author was partially supported by NSF CAREER grant DMS-1350396.}

\begin{abstract}
The locus of genus-two curves with $n$ marked Weierstrass points has codimension $n$ inside the moduli space of genus-two curves with $n$~marked points, for $n\leq 6$. It is well known that the class of the closure of the divisor obtained for $n=1$ spans an extremal ray of the cone of effective divisor classes. 
We generalize this result for all $n$: we show that the class of the closure of the locus of genus-two curves with $n$ marked Weierstrass points spans an extremal ray of the cone of effective classes of codimension $n$, for $n\leq 6$. A related construction produces extremal nef curve classes in  moduli spaces of pointed elliptic curves.
\end{abstract}

\maketitle

Every smooth curve of genus two has a unique map of degree two to the projective line, ramified at six  points, called {\it Weierstrass points}. It follows that
the locus $\mathcal{H}yp_{2,n}$ of curves of genus two with $n$~marked Weierstrass points  has co\-di\-men\-sion $n$ inside the moduli space $\MM_{2,n}$
of smooth curves of genus two with $n$ marked points, for $1\leq n \leq 6$. 
In this paper, we study the classes of the closures of the loci $\mathcal{H}yp_{2,n}$ inside the moduli space of stable curves $\BM_{2,n}$.

The cone of effective codimension-one classes on $\BM_{2,1}$ is explicitly described in \cite{MR2701950} and \cite{MR2216265}, and encodes the rational contractions of $\BM_{2,1}$. 
It is thus natural to study cones of effective classes of higher codimension.
The following is one of the first results in this direction.

\begin{thm}
\label{Hyp2nres}
For $1\leq n \leq 6$, the class of $\overline{\mathcal{H}yp}_{2,n}$ is rigid and extremal in the cone of effective classes of codimension~$n$ in 
$\BM_{2,n}$. 
\end{thm}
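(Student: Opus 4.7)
The plan is to use a moving-cycle argument, generalizing the classical moving-curve technique (as in the proof for the $n=1$ case) to higher codimension. The goal is to construct a family of effective dimension-$n$ test cycles $\{T_b\}_{b\in B}$ in $\BM_{2,n}$ such that (i) the $T_b$ cover $\overline{\mathcal{H}yp}_{2,n}$ and (ii) $[\overline{\mathcal{H}yp}_{2,n}] \cdot [T_b] < 0$ for $b$ generic. Given such a family, any decomposition $[\overline{\mathcal{H}yp}_{2,n}] = \alpha + \beta$ into effective codimension-$n$ classes must have one summand pairing negatively with a generic $T_b$; since both the summand and $T_b$ are effective, this negativity can only come from non-proper intersection along an irreducible component of the summand that meets $T_b$ in positive-dimensional excess. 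The covering property and the irreducibility of $\overline{\mathcal{H}yp}_{2,n}$ then force this component to equal $\overline{\mathcal{H}yp}_{2,n}$ itself. Iterating yields $\alpha,\beta \in \mathbb{Q}_{\ge 0} [\overline{\mathcal{H}yp}_{2,n}]$, simultaneously establishing rigidity and extremality.

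The test cycles are constructed via boundary geometry. A prototypical family: take a smooth genus~2 curve $C$ with Weierstrass points $w_1,\ldots,w_6$, attach a rational tail $R$ at a chosen $w_j$, and distribute some of the marked points on $R$ with the rest at other Weierstrass points of $C$; the stabilization contracts $R$ to $w_j$, so the resulting pointed curves lie in $\overline{\mathcal{H}yp}_{2,n}$. Varying $C$ in $\MM_2$ and the configuration on $R$ produces a family inside $\overline{\mathcal{H}yp}_{2,n}$. For larger $n$, deeper boundary degenerations---compact-type curves $E_1 \cup_q E_2$ with marks at 2-torsion points, or nested rational tails---are needed to sweep out the whole cycle. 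An inductive approach via the forgetful map $\pi : \BM_{2,n} \to \BM_{2,n-1}$ is natural: $\pi^{-1}(\overline{\mathcal{H}yp}_{2,n-1})$ has codimension $n-1$ and $\overline{\mathcal{H}yp}_{2,n}$ is a divisor inside it, cut out by the condition that the $n$-th marked point is a Weierstrass point, potentially allowing propagation of extremality from the $(n-1)$-case.

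The main obstacle is the intersection calculation $[\overline{\mathcal{H}yp}_{2,n}] \cdot [T_b] < 0$. One would express $\overline{\mathcal{H}yp}_{2,n}$ locally as the intersection of $n$ divisor classes (one for each Weierstrass condition), restrict their product to $T_b$, and verify negativity. Along the rational-tail families each factor reduces to a $\psi$-type class modified by the contracted tail, contributing with negative sign in analogy with the classical $n=1$ computation, and one expects the product to inherit this negativity. The combinatorial verification across the entire range $1 \le n \le 6$---checking that the chosen test cycles genuinely cover $\overline{\mathcal{H}yp}_{2,n}$ in every boundary stratum, and handling the extreme case $n=6$ where all Weierstrass points are marked and $\overline{\mathcal{H}yp}_{2,6}$ becomes essentially a finite cover of $\BM_2$---is the technical heart of the argument and may require different test cycles for different values of $n$.
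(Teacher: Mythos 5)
Your central mechanism does not work in higher codimension, and this is a genuine gap rather than a technicality. For a divisor $D$ and a curve $C$, the implication ``$D\cdot C<0$ and $C\not\subset\operatorname{Supp}(D)$ is impossible'' is what powers the classical moving-curve argument. For an effective cycle $A$ of codimension $n$ and an effective cycle $T_b$ of dimension $n$, the inequality $[A]\cdot[T_b]<0$ only tells you that $A$ and $T_b$ fail to intersect properly, i.e.\ $A\cap T_b$ has a positive-dimensional component; it does not force any component of $A$ to contain $T_b$, nor to coincide with $\overline{\mathcal{H}yp}_{2,n}$. Since your proposed test families (rational tails at Weierstrass points, varying $C$ in $\mathcal M_2$) all pass through large common boundary loci, a summand $A$ supported entirely on such a boundary stratum could meet every generic $T_b$ in positive dimension and absorb the negativity, and your argument gives no way to exclude this. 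On top of this structural problem, the two quantitative inputs you need --- that the $T_b$ cover $\overline{\mathcal{H}yp}_{2,n}$ and that $[\overline{\mathcal{H}yp}_{2,n}]\cdot[T_b]<0$ --- are never established; ``one expects the product to inherit this negativity'' is not a computation, and there is no reason to expect such covering families with negative intersection to exist for all $n\le 6$.

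The remark you make in passing about the forgetful map is in fact the correct route, and it is the one the paper takes. The actual proof never constructs dual test cycles of complementary dimension. Instead it (i) reduces from $\operatorname{Eff}^n$ to the cone of effective \emph{tautological} classes using the fact that all even cohomology of $\overline{\mathcal M}_{2,n}$ is tautological for $n<20$; (ii) proves a recursion (Theorem~\ref{rec}): writing $[\overline{\mathcal{H}yp}_{2,n}]=\sum a_i[X_i]$, one pushes forward under each $\pi_j$, uses that $R^{n}(\mathcal M^{rt}_{2,n})$ is one-dimensional (generated by $\delta_{2,\psi}$) to handle the $X_i$ killed by all pushforwards, and uses rigidity and extremality of $[\overline{\mathcal{H}yp}_{2,n-1}]$ to force the remaining $X_i$ into $\bigcap_j\pi_j^{-1}(\overline{\mathcal{H}yp}_{2,n-1})$, hence onto $\overline{\mathcal{H}yp}_{2,n}$ itself; and (iii) settles the base case $n=2$ by an explicit expression of $[\overline{\mathcal{H}yp}_{2,2}]$ in decorated boundary strata classes, where a negative coefficient shows the class lies outside the cone spanned by the only boundary classes that could appear. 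If you want to rescue your write-up, develop the inductive pushforward idea into the main argument and supply the $n=2$ class computation; the covering-family heuristic should be dropped.
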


Theorem \ref{Hyp2nres} motivates the computation of the classes of the loci $\overline{\mathcal{H}yp}_{2,n}$. The class of the divisor $\overline{\mathcal{H}yp}_{2,1}$ has been computed in \cite{MR910206}, and the class of the codimension-two locus $\overline{\mathcal{H}yp}_{2,2}$ has been computed in \cite{DR}. In \S \ref{Hyp23} we study the next nontrivial case.

\begin{thm}
\label{hyp23class}
In $A^3(\BM_{2,3})$, we have
\begin{eqnarray*}
\left[\overline{\mathcal{H}yp}_{2,3}\right] &=& \left( (3\omega_1-\lambda-\delta_1)\cdot(3\omega_2-\lambda-\delta_1) -(\delta_{0:\{1,2\}}+\delta_{0:3})\cdot(3\omega_1-\lambda-\delta_1)-\gamma_{1:\emptyset} - \gamma_{1:\{3\}} \right)\\
&&\cdot (3\omega_3-\lambda-\delta_1 - \delta_{0:\{1,3\}} - \delta_{0:\{2,3\}}) - \gamma_{1:\{1\}}\cdot(2\psi_1-\delta_{1:\{1\}})- \gamma_{1:\{2\}}\cdot(2\psi_2-\delta_{1:\{2\}})\\
&&{} - \gamma_{1:\emptyset} \cdot(\psi_1-\delta_{0:\{1,3\}}) .
\end{eqnarray*}
\end{thm}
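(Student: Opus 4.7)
The plan is to express $[\overline{\mathcal{H}yp}_{2,3}]$ as a corrected product of two classes of lower codimension, in analogy with how $[\overline{\mathcal{H}yp}_{2,2}]$ was obtained in \cite{DR} by intersecting two Weierstrass divisors. A stable three-pointed genus-two curve lies in $\mathcal{H}yp_{2,3}$ precisely when its first two marked points are Weierstrass---so that $[C,p_1,p_2]$ defines a point of $\overline{\mathcal{H}yp}_{2,2}$---\emph{and} its third marked point is Weierstrass. This suggests an identity of the form
\[
[\overline{\mathcal{H}yp}_{2,3}] \;=\; A \cdot B \;-\; (\text{excess}),
\]
where $A$ represents the pullback of $[\overline{\mathcal{H}yp}_{2,2}]$ under the forgetful map $\pi_3\colon \overline{\mathcal{M}}_{2,3} \to \overline{\mathcal{M}}_{2,2}$, $B$ represents the pullback of the Weierstrass divisor on $\overline{\mathcal{M}}_{2,1}$ under the forgetful map $\overline{\mathcal{M}}_{2,3}\to \overline{\mathcal{M}}_{2,1}$ remembering only the third point, and the excess accounts for non-transverse intersections along boundary strata.

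First I would invoke Cukierman's formula $[\overline{\mathcal{H}yp}_{2,1}] = 3\omega - \lambda - \delta_1$ from \cite{MR910206}, together with the standard pullback formulas for forgetful maps; the boundary corrections coming from collisions of the Weierstrass section with the forgotten sections $p_1$ and $p_2$ produce the factor $B = 3\omega_3 - \lambda - \delta_1 - \delta_{0:\{1,3\}} - \delta_{0:\{2,3\}}$ appearing in the theorem. Similarly, pulling back the codimension-two class of \cite{DR} via $\pi_3$ and adjusting for the third section gives the factor $A$, with the subtractions $(\delta_{0:\{1,2\}}+\delta_{0:3})(3\omega_1-\lambda-\delta_1)$ and $\gamma_{1:\emptyset}+\gamma_{1:\{3\}}$ accounting for excess-intersection corrections already present at codimension two.

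The product $A \cdot B$ correctly represents $[\overline{\mathcal{H}yp}_{2,3}]$ on the open locus $\mathcal{M}_{2,3}$, since the three Weierstrass conditions are independent on smooth curves. The excess arises along boundary strata where these conditions degenerate---typically strata supporting an elliptic tail that carries some marked points and meets the residual genus-two component at a Weierstrass point. A dimension count identifies three problematic codimension-three strata, parametrized by $\gamma_{1:\{1\}}$, $\gamma_{1:\{2\}}$, and $\gamma_{1:\emptyset}$; an excess-normal-bundle computation on each produces the respective Chern class factors $2\psi_1 - \delta_{1:\{1\}}$, $2\psi_2 - \delta_{1:\{2\}}$, and $\psi_1 - \delta_{0:\{1,3\}}$, giving the three subtracted terms in the stated formula.

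The main obstacle is the bookkeeping in this final step. One must enumerate every excess stratum, determine the scheme-theoretic multiplicity with which it appears in $A \cdot B$, and verify that no additional codimension-three stratum contributes. The asymmetry of the subtracted terms---note in particular $\psi_1 - \delta_{0:\{1,3\}}$ attached to $\gamma_{1:\emptyset}$, rather than a symmetric expression---reflects the asymmetric factorization $A \cdot B$, and must be handled by direct geometric analysis of each offending stratum. As a sanity check, one can restrict the claimed class to test surfaces in $\overline{\mathcal{M}}_{2,3}$ (for example, obtained from pencils of hyperelliptic curves together with sections of Weierstrass points) and compare the numerical intersection numbers with direct enumerative counts.
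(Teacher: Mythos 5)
Your proposal follows essentially the same route as the paper: the stated formula is exactly $\pi_3^*[\overline{\mathcal{H}yp}_{2,2}]\cdot\bigl(\rho_3^*[\overline{\mathcal{H}yp}_{2,1}]-\delta_{0:\{1,3\}}-\delta_{0:\{2,3\}}\bigr)$ minus corrections supported on $\Gamma_{1:\{1\}}$, $\Gamma_{1:\{2\}}$, $\Gamma_{1:\emptyset}$, and you correctly identify both factors and all the correction loci. Two points, however, separate your outline from a proof. First, the correction terms are not excess-intersection contributions in the Fulton sense: the intersection $\pi_3^{-1}(\overline{\mathcal{H}yp}_{2,2})\cap\rho_3^{-1}(\overline{\mathcal{H}yp}_{2,1})$ is of the expected dimension, and the loci $\Xi_i\subset\Gamma_{1:\{i\}}$ (elliptic bridge with $2p_i\sim x+y$) and $\Theta\subset\Gamma_{1:\emptyset}$ are honest extra irreducible components of it, detected via admissible covers; their classes $(2\psi_i-\delta_{1:\{i\}})\cdot\gamma_{1:\{i\}}$ and $\gamma_{1:\emptyset}\cdot(\psi_1-\delta_{0:\{1,3\}})$ come not from a normal-bundle computation but from identifying them as push-forwards of known divisors ($2p\sim x+y$ on $\BM_{1,3}$, and a fiber of $\BM_{0,5}\to\BM_{0,4}$) along the gluing maps onto the $\Gamma$-strata. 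Second, and more importantly, you defer the determination of the multiplicities to ``bookkeeping,'' but this is where the actual work lies: the paper pins down the coefficients by symmetry, by pushing the identity forward under $\pi_1$ and comparing with the relation $\pi_1^*[\overline{\mathcal{H}yp}_{2,1}]\cdot\pi_2^*[\overline{\mathcal{H}yp}_{2,1}]=[\overline{\mathcal{H}yp}_{2,2}]+\gamma_{1:\emptyset}+\delta_{2,w}$ on $\BM_{2,2}$, and finally by restricting to an explicit three-dimensional test family (a pencil of cubics with an elliptic tail and a rational tail) to resolve the remaining two unknowns. Without some such computation there is no argument that every multiplicity equals one, nor that the list of excess components is exhaustive, so as written the proposal is a correct plan with the decisive steps still open.
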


For elliptic curves, the difference of two ramification points of a degree-$2$ map to the projective line can be regarded as a $2$-torsion point. 
In a somewhat similar fashion, we consider in general the locus of points on elliptic curves whose pairwise differences are $k$-torsion points.
More precisely, for $k\geq 2$ and $2\leq n \leq k^2$, consider the following one-dimensional locus in $\MM_{1,n}$
\[
\mathcal{T}\!or^k_{1,n} := \{ [C, p_1, \dots, p_n] \in \MM_{1,n} \,|\, k p_1\sim \cdots \sim k p_n \}.
\]
Note that ${\mathcal{T}\!or}^k_{1,n}$ might be reducible: $\mathcal{T}\!or^d_{1,n}$ is a sub-curve of $\mathcal{T}\!or^k_{1,n}$ for all divisors $d$ of $k$. 

The class of the divisor $\overline{\mathcal{T}\!or}^2_{1,2}$ is in the interior of the two-dimensional cone of effective divisor classes in $\BM_{1,2}$ and spans an extremal ray of the cone of nef divisor classes in $\BM_{1,2}$ (\cite{MR2701950}). 

\begin{thm}
\label{Tor}
For $k\geq 2$ and $2\leq n \leq k^2$, the class of $\overline{\mathcal{T}\!or}^k_{1,n}$ spans an extremal ray of the cone of nef curve classes in $\BM_{1,n}$, and this ray does not dependent on $k$. 
\end{thm}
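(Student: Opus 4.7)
The plan is to realize each irreducible component of $\overline{\mathcal{T}\!or}^k_{1,n}$ as the image of a morphism from a finite level cover of $\BM_{1,1}$. Fixing distinct $k$-torsion points $\tau_2,\ldots,\tau_n$, the assignment $(E,p_1)\mapsto(E, p_1, p_1+\tau_2,\ldots,p_1+\tau_n)$ defines a morphism from the moduli space of elliptic curves with full level-$k$ structure to $\BM_{1,n}$, and every component of $\overline{\mathcal{T}\!or}^k_{1,n}$ arises this way, up to the actions of $\mathrm{SL}_2(\mathbb{Z}/k)$ and of permutations of the $\tau_i$. Each such component maps finitely onto $\BM_{1,1}$ via the forgetful morphism retaining only $p_1$.

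The central observation for the $k$-independence of the ray is that on each component every $\psi_i$ restricts to the Hodge class $\lambda$: since $p_i=p_1+\tau_i$ is a translate of the identity section by a flat (torsion) section of the universal elliptic curve, pulling back the relative dualizing sheaf along $p_i$ yields the Hodge bundle. Hence $(\psi_i-\psi_j)\cdot [\overline{\mathcal{T}\!or}^k_{1,n}]=(\psi_i-\lambda)\cdot [\overline{\mathcal{T}\!or}^k_{1,n}]=0$ for every $i,j$ and every $k$, placing the ray in a $k$-independent subspace of $N_1(\BM_{1,n})$. I would then compute the intersections with the boundary divisors $\delta_{\mathrm{irr}}$ and $\delta_{0,S}$ using the Tate uniformization at the cusps of the level cover: under $E[k]\cong \mu_k\oplus\mathbb{Z}/k$, the $k$-torsion translates split into those remaining on the normalization and those sliding into the node, and summing the resulting boundary contributions over all components with the $\mathrm{SL}_2(\mathbb{Z}/k)$-action yields intersection ratios relative to $\lambda\cdot [\overline{\mathcal{T}\!or}^k_{1,n}]$ that are manifestly independent of $k$.

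For nefness, an irreducible effective divisor $D$ not containing $\overline{\mathcal{T}\!or}^k_{1,n}$ pairs nonnegatively with it automatically; in the contrary case the computation reduces to a restricted normal bundle on each component, and the candidate divisors that could contain it form a short list of tautological or boundary classes that can be checked by hand. For extremality, suppose $[\overline{\mathcal{T}\!or}^k_{1,n}]=a[C_1]+b[C_2]$ with $a,b>0$ and each $[C_i]$ nef. Any pseudoeffective $D$ with $D\cdot [\overline{\mathcal{T}\!or}^k_{1,n}]=0$ must then also satisfy $D\cdot [C_i]=0$. I would exhibit enough such pseudoeffective perpendicular divisors---specifically the boundary divisors $\delta_{0,S}$ that miss $\overline{\mathcal{T}\!or}^k_{1,n}$ together with effective combinations encoding the tautological vanishings above---to span a hyperplane in $N^1(\BM_{1,n})$, forcing each $[C_i]$ into the ray of $[\overline{\mathcal{T}\!or}^k_{1,n}]$.

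The main obstacle is this last step: the classes $\psi_i-\psi_j$ and $\psi_i-\lambda$, though perpendicular to $[\overline{\mathcal{T}\!or}^k_{1,n}]$, are not themselves pseudoeffective, so the perpendicular hyperplane in the pseudoeffective cone must be assembled from genuine boundary contributions. Identifying exactly which $\delta_{0,S}$ (and which further effective combinations) fail to meet $\overline{\mathcal{T}\!or}^k_{1,n}$ requires a careful combinatorial analysis of how $k$-torsion sections distribute across the components of the $k$-gon arising at each cusp of the level cover, and this combinatorial analysis is the technical heart of the argument.
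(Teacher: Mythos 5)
Your proposal contains two genuine gaps, both at points where the paper's argument is short and decisive. First, the fact you defer as ``the technical heart'' --- determining which boundary divisors meet $\overline{\mathcal{T}\!or}^k_{1,n}$ --- is actually the linchpin, and the answer is that \emph{none} of the $\delta_{0:J}$ do: a boundary member of the closure carries an admissible cover of $\mathbb{P}^1$ of degree $k$ totally ramified at $p_i$ and $p_j$, and Riemann--Hurwitz rules out a rational tail containing two of the marked points (equivalently, in your language, distinct torsion sections of the N\'eron model stay distinct and smooth in the limit polygon, so no rational tails appear). So $[\overline{\mathcal{T}\!or}^k_{1,n}]\cdot\delta_{0:J}=0$ for every $J$, while $[\overline{\mathcal{T}\!or}^k_{1,n}]\cdot\lambda>0$. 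Since the classes $\delta_{0:J}$ with $|J|\geq 2$ already span a hyperplane of $N^1(\BM_{1,n})$, your extremality step then closes immediately with no ``further effective combinations'' needed, and the $k$-independence of the ray is automatic (it is the unique nef direction annihilating that hyperplane); your observation that $\psi_i$ restricts to $\lambda$ is correct but is a consequence of this, not a substitute for it.

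The second and more serious gap is nefness of $[\overline{\mathcal{T}\!or}^k_{1,n}]$ itself. Your plan --- that the irreducible effective divisors containing the curve ``form a short list of tautological or boundary classes that can be checked by hand'' --- is unjustified: there is no a priori finite list of irreducible divisors containing a given curve, and normal-bundle considerations do not control intersections with arbitrary pseudoeffective classes. What is actually needed is the statement that every pseudoeffective divisor class has nonnegative $\lambda$-coefficient when written in the basis $\lambda,\{\delta_{0:J}\}$. The paper obtains this from its key lemma: the subcone of $\overline{\rm Eff}^1(\BM_{1,n})$ generated by all the $\delta_{0:J}$ is \emph{extremal}, proved recursively on $|J|$ by exhibiting, for each $J$, a moving curve in $\Delta_{0:J}$ with negative self-pairing against $\delta_{0:J}$ and zero pairing against the other boundary divisors (following Rulla). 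Since that extremal subcone is full-dimensional in a hyperplane and $\lambda$ is effective, $\overline{\rm Eff}^1$ lies on one side of the hyperplane, which is exactly the nefness of $[\overline{\mathcal{T}\!or}^k_{1,n}]$. Your proposal has no replacement for this lemma, and without it neither the nefness nor the phrase ``extremal ray of the nef cone'' is established.
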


\vskip4pt

\noindent {\bf Structure of the paper.} The proof of Theorem \ref{Tor} is in \S \ref{g=1} --- this section is independent from the rest of the paper. In \S \ref{Hypgnrt} we collect some classical results on classes of hyperelliptic loci which are needed later on.  
The proof of Theorem \ref{Hyp2nres} in the case $n=2$ is in \S \ref{extrHyp22} and is based on the explicit description of the codimension-two class $[\overline{\mathcal{H}yp}_{2,2}]$ presented in \S \ref{genus2classes}. 
In \S \ref{recS} we prove a recursive argument that works in a more general context, and we thus  complete the proof of Theorem \ref{Hyp2nres}.
Finally, we prove Theorem \ref{hyp23class} in \S \ref{Hyp23} using the description of the classes $[\overline{\mathcal{H}yp}_{2,1}]$ and $[\overline{\mathcal{H}yp}_{2,2}]$ from \S \ref{genus2classes}.

\vskip4pt

\noindent {\bf Notation.} 
We use throughout the following notation for divisor classes on $\BM_{g,n}$. The class $\psi_i$ is the cotangent class at the marked point $i$, and the class $\omega_i$ is the pull-back of the class $\psi_i$ via the map $\rho_i\colon \BM_{g,n}\rightarrow \BM_{g,1}$ obtained by forgetting all marked points but the point $i$. The class $\delta_{\rm irr}$ is the class of the closure of the locus of nodal irreducible curves. We denote by $\lambda$ the pull-back of the first Chern class of the Hodge bundle over $\BM_g$. For $i\in\{0,\dots,g\}$ and $J\subseteq\{1,\dots,n\}$, we denote by $\delta_{i:J}$ the class of the divisor $\Delta_{i:J}$ whose general element has a component of genus $i$ containing the points marked by indices in $J$ and meeting a component of genus $g-i$ containing the remaining marked points. One has $\delta_{i:J}=\delta_{g-i:J^c}$. We will denote by $\delta_{i:j}$ the sum of  all distinct divisor classes $\delta_{i:J}$ such that $|J|=j$, and by $\delta_i$ the sum of all distinct classes $\delta_{i:j}$ for all possible values of $j$.
Let $\pi_k\colon \BM_{g,n}\rightarrow \BM_{g,n-1}$ be the map obtained by forgetting the $k$-th marked point. Note that $\pi_k^*(\delta_i)=\delta_i$ on $\BM_{g,n}$, for $n\geq 2$.

\vskip4pt

We also use the following codimension-two tautological classes on $\BM_{g,n}$.
For $J\subseteq\{1,\dots,n\}$, let $\gamma_{1:J}$ be the class of the locus $\Gamma_{1:J}$ of curves whose general element has an elliptic component  containing exactly the points marked by indices in $J$, and meeting in two points a component of genus $g-2$ containing the remaining marked points. 

\vskip4pt

Throughout we work over an algebraically closed field of characteristic $0$. 
All cycle classes are stack fundamental classes, 
and all cohomology and Chow groups are taken with rational coefficients. 
We implicitly assume real coefficients when we consider nef classes and closures of cones of effective classes.

\vskip4pt

\noindent {\bf Acknowledgements.} We would like to thank Izzet Coskun for helpful discussions on related ideas,  Dan Petersen for pointing our attention to the results in \cite{MR3179574} and \cite{Petersen}, and the referee for suggesting many improvements on the exposition.

\section{Extremal nef curve classes on \texorpdfstring{$\BM_{1,n}$}{BM(1,n)}}
\label{g=1}

In this section we show that the class of the one-dimensional locus $\overline{\mathcal{T}\!or}^k_{1,n}$ spans an extremal ray of the cone ${\rm Nef}^{n-1}(\BM_{1,n})$  of nef curve classes on $\BM_{1,n}$, for $k\geq 2$ and $2\leq n \leq k^2$.

By definition, the cone ${\rm Nef}^{n-1}(\BM_{1,n})$  is dual to the cone of pseudo-effective divisors $\overline{\rm Eff}^1(\BM_{1,n})$.
A subcone $S$ of $\overline{\rm Eff}^1(\BM_{1,n})$ is {\it extremal in} $\overline{\rm Eff}^1(\BM_{1,n})$ if whenever $E_1+E_2\in S$, then $E_1, E_2 \in S$.
We first show that the cone generated by the boundary divisor classes is extremal in $\overline{\rm Eff}^1(\BM_{1,n})$. 

\begin{lem}
\label{Lg1}
The cone generated by the classes $\delta_{0:J}$ for $J\subseteq \{1,\dots, n\}$ is extremal in $\overline{\rm Eff}^1(\BM_{1,n})$ for~$n\geq 2$.
\end{lem}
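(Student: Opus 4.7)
The plan is to construct, for each subset $J\subseteq\{1,\dots,n\}$ with $|J|\geq 2$, a covering curve $B_J$ of the boundary divisor $\Delta_{0:J}$ whose intersection numbers isolate $\delta_{0:J}$, and then to conclude extremality by a Zariski-type decomposition argument. As a preliminary step I would verify that each individual $\delta_{0:J}$ spans an extremal rigid ray of $\overline{\rm Eff}^1(\BM_{1,n})$, which follows from the standard negative-self-intersection argument applied to the curves $B_J$ below.

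For the construction, note that $\Delta_{0:J}$ is the image of the gluing map $g_J\colon\BM_{1,J^c\cup\{*\}}\times\BM_{0,J\cup\{*\}}\to\BM_{1,n}$. I would take $B_J:=g_J(B\times\{r\})$, where $r\in\BM_{0,J\cup\{*\}}$ is a general point and $B\subset\BM_{1,J^c\cup\{*\}}$ is a curve avoiding every boundary divisor $\delta_{0:K}$ with $|K|\geq 2$; concretely, $B$ can be realized as a modular curve $X(k)$ equipped with $|J^c|+1$ distinct $k$-torsion sections of the universal elliptic curve, for $k$ large enough. The normal-bundle identity $g_J^*\delta_{0:J}=-\psi_*-\psi_{*'}$, combined with $\psi_{*'}|_{\{r\}}=0$ and $\psi_*\cdot B>0$ (since $B$ is not contracted by $\BM_{1,J^c\cup\{*\}}\to\BM_{1,1}$), gives $B_J\cdot\delta_{0:J}<0$. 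For $K\neq J$, the class $g_J^*\delta_{0:K}$ is supported on the boundary of one of the two factors, which by the choice of $B$ and $r$ is avoided, so that $B_J\cdot\delta_{0:K}=0$.

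Now, assuming $E_1+E_2=\sum_J a_J\delta_{0:J}$ with $a_J\geq 0$ and $E_i\in\overline{\rm Eff}^1(\BM_{1,n})$, define the Zariski multiplicity $\sigma_J(E):=\sup\{m\geq 0\colon E-m\delta_{0:J}\in\overline{\rm Eff}^1\}$. Superadditivity together with the rigidity of each $\delta_{0:J}$ yields $\sigma_J(\sum_K a_K\delta_{0:K})=a_J$. The covering-curve property of $B_J$ gives the bound $B_J\cdot E\geq\sigma_J(E)\,B_J\cdot\delta_{0:J}$ for pseudo-effective $E$, so pairing the decomposition $\sum a_J\delta_{0:J}=E_1+E_2$ with $B_{J_0}$ yields $a_{J_0}(B_{J_0}\cdot\delta_{0:J_0})\geq(\sigma_{J_0}(E_1)+\sigma_{J_0}(E_2))(B_{J_0}\cdot\delta_{0:J_0})$. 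Dividing by $B_{J_0}\cdot\delta_{0:J_0}<0$ and combining with superadditivity forces $\sigma_{J_0}(E_1)+\sigma_{J_0}(E_2)=a_{J_0}$ for every $J_0$. Iterating over $J$, the residuals $F_i:=E_i-\sum_J\sigma_J(E_i)\delta_{0:J}$ are pseudo-effective with $F_1+F_2=0$, and strict convexity of $\overline{\rm Eff}^1(\BM_{1,n})$ (the cone contains no line, since $\BM_{1,n}$ is projective) forces $F_1=F_2=0$, giving $E_i\in\langle\delta_{0:J}\rangle_J$ as required. The main obstacle is ensuring that $B_J$ behaves as a genuine covering curve of $\Delta_{0:J}$, so that the bound $B_J\cdot E\geq\sigma_J(E)\,B_J\cdot\delta_{0:J}$ holds for every pseudo-effective $E$; this requires fitting $B_J$ into a family of modular-curve constructions sweeping out $\Delta_{0:J}$, justifying the passage from the effective case (where the bound is transparent) to the pseudo-effective case by a limiting argument.
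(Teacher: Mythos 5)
Your overall strategy runs parallel to the paper's --- produce, for each $J$, a curve class supported on $\Delta_{0:J}$ that meets $\delta_{0:J}$ negatively and the other boundary classes trivially, then conclude by a decomposition argument (the paper does this via Rulla's Lemma 1.4.6 and an induction on $|J|$, you via a Nakayama-type $\sigma$-decomposition) --- but the curves $B_J$ you construct do not do the job, and the difficulty you flag at the end is fatal rather than technical. For $|J|<n$ the curve $B_J=g_J(B\times\{r\})$, with $B$ a modular curve carrying torsion sections, only passes through points of $\Delta_{0:J}$ whose marked points on the genus-one component are pairwise torsion translates of one another. These points form a countable union of curves in $\Delta_{0:J}$: Zariski dense, but never containing a general point, and the members of this union for different levels $k$ (and different torsion data) are not numerically equivalent, so they do not assemble into a covering family of a single class. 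Hence $B_J$ is not a moving curve in $\Delta_{0:J}$, and this breaks every step that uses the covering property: the preliminary claim that each $\delta_{0:J}$ is rigid and extremal, and the key inequality $B_J\cdot E\geq\sigma_J(E)\,(B_J\cdot\delta_{0:J})$, which needs $B_J$ not to be contained in the support of (limits of) effective representatives of $E-\sigma_J(E)\delta_{0:J}$ --- something you cannot guarantee for one rigidly chosen curve. A secondary gap: the equality $\sigma_{J_0}\bigl(\sum_K a_K\delta_{0:K}\bigr)=a_{J_0}$ does not follow from ``superadditivity together with rigidity'' for your definition of $\sigma_{J_0}$ as a supremum over pseudo-effective differences; the upper bound $\leq a_{J_0}$ is essentially the extremality you are trying to prove unless you use Nakayama's definition via multiplicities of ample perturbations.

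The repair for the main gap is exactly the paper's choice of curve: fix a general elliptic curve containing the marked points of $J^c$ and a fixed rational tail carrying the points of $J$, and move the point of attachment along the elliptic component (for $J=\{1,\dots,n\}$, where this family is constant in moduli, use instead a pencil of plane cubics with the rational tail attached at a base point). These curves genuinely sweep out $\Delta_{0:J}$, all have the same class $C_J$, and satisfy $C_J\cdot\delta_{0:J}<0$ by the same normal-bundle identity you invoke. The price is that $C_J\cdot\delta_{0:K}$ is no longer zero for all $K\neq J$: when the moving attachment point collides with a marked point $q_j$ on the elliptic component, the stable limit sprouts a $\mathbb{P}^1$ and the curve enters $\Delta_{0:J\cup\{j\}}$, so one only gets $C_J\cdot\delta_{0:K}=0$ for $K\neq J$ with $|K|\leq|J|$. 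Consequently your one-shot decomposition over all $J$ simultaneously must be replaced by an induction on $|J|$, adding the classes $\delta_{0:J}$ to the extremal subcone in order of increasing $|J|$ as the paper does, or else reworked to tolerate the positive intersections with the larger strata.
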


\begin{proof}
We follow the strategy of \cite[Corollary 1.4.7]{MR2701950}, where Rulla shows that the cone generated by the boundary divisor classes on $\BM_g$ is extremal in $\overline{\rm Eff}^1(\BM_{g})$.

We first show that the classes $\delta_{0:\{i,j\}}$ are extremal in $\overline{\rm Eff}^1(\BM_{1,n})$, for $n\geq 3$ and $i,j\in \{1,\dots,n\}$. 
Consider the one-dimensional family of curves $C_{\{i,j\}}$ obtained by attaching a rational component containing the points with markings $i$ and $j$ at a moving point of an elliptic curve containing the remaining $n-2$ marked points. The curve $C_{\{i,j\}}$ is a moving curve in $\Delta_{0:\{i,j\}}$, and one has $C_{\{i,j\}}\cdot \delta_{0:\{i,j\}}<0$. It follows that $\delta_{0:\{i,j\}}$ is rigid and extremal in $\overline{\rm Eff}^1(\BM_{1,n})$.
Moreover, $C_{\{i,j\}}$ has empty intersection with $\delta_{0:J}$ for $|J|=2$ and $J\not= \{i,j\}$. From \cite[Lemma 1.4.6]{MR2701950}, the cone generated by the classes $\delta_{0:J}$ for $|J|=2$ is extremal in $\overline{\rm Eff}^1(\BM_{1,n})$.

We then use the following recursion on $k$, for $3\leq k \leq n-1$. Suppose that the cone generated by all classes $\delta_{0:J}$ with $|J| < k<n$ is extremal in $\overline{\rm Eff}^1(\BM_{1,n})$. For $i_1,\dots,i_k\in \{1,\dots,n\}$, consider the one-dimensional family $C_{\{i_1,\dots, i_k\}}$ obtained by attaching a rational component containing the points with markings $i_1,\dots, i_k$ at a moving point of an elliptic curve containing the remaining $n-k$ marked points.
One has $C_{\{i_1,\dots, i_k\}} \cdot \delta_{0:\{i_1,\dots, i_k\}}<0$ and $C_{\{i_1,\dots, i_k\}} \cdot \delta_{0:J}=0$ for $|J|\leq k$ and $J\not=\{i_1,\dots, i_k\}$. Again from \cite[Lemma 1.4.6]{MR2701950}, the cone generated by the classes $\delta_{0:J}$ for $|J|\leq k$ is thus extremal in $\overline{\rm Eff}^1(\BM_{1,n})$.

Finally, let us consider the class $\delta_{0:\{1,\dots,n\}}$ for $n\geq 2$. Consider the one-dimensional family $E$ obtained by attaching a rational curve containing all marked points to a base point of a pencil of plane cubics. One has $E\cdot \delta_{0:\{1,\dots,n\}} <0$ and $E\cdot \delta_{0:J}=0$ for $|J|<n$. Hence, the cone generated by $\delta_{0:\{1,\dots,n\}}$ and $\delta_{0:J}$ with $|J|<n$ is extremal in $\overline{\rm Eff}^1(\BM_{1,n})$.
\end{proof}

We are now ready to prove Theorem \ref{Tor}. 

\begin{proof}[Proof of Theorem \ref{Tor}]
Singular elements in $\overline{\mathcal{T}\!or}^k_{1,n}$ do not have rational tails. 
Indeed, consider a singular pointed curve $[C,p_1,\dots, p_n]$ inside the closure of ${\mathcal{T}\!or}^k_{1,n}$.
The condition $k p_i \sim k p_j$ means that there exists an admissible cover $\pi\colon C\rightarrow \mathbb{P}^1$ of degree $k$ totally ramified at $p_i$ and $p_j$. Suppose $C$ has a rational tail $R$ containing $p_i$ and $p_j$. By the Riemann-Hurwitz formula, $R$ does not contain any other ramification point of $\pi$. Since $\overline{C \setminus R}$ has arithmetic genus $1$, one has 
$\deg(\pi|_{\overline{C \setminus R}})>1$. Hence, the tail $R$ has to meet the other components of $C$ in more than one point, a contradiction.
It follows that  $[\overline{\mathcal{T}\!or}^k_{1,n}]$ has zero intersection with all divisor classes $\delta_{0:J}$. 
Note that by the projection formula relative to the natural map $\BM_{1,n}\rightarrow \BM_{1,1}$, the class $[\overline{\mathcal{T}\!or}^k_{1,n}]$ has positive intersection with the divisor class $\lambda$. 

From Lemma \ref{Lg1}, the cone generated by the classes $\delta_{0:J}$ is extremal in $\overline{\rm Eff}^1(\BM_{1,n})$, hence by duality the class $[\overline{\mathcal{T}\!or}^k_{1,n}]$ spans an extremal ray of ${\rm Nef}^{n-1}(\BM_{1,n})$, and this ray does not depend on $k$.  
\end{proof}

\section{On hyperelliptic loci}
\label{Hypgnrt}

In the following, we collect some well-known facts about classes of hyperelliptic loci which we will use later. 
For $g\geq 2$ and $0\leq n \leq 2g+2$, let
\[
\mathcal{H}yp_{g,n} := \left\{ [C, p_1, \dots, p_n] \in \MM_{g,n} \,|\, C \,\, \mbox{is hyperelliptic and $h^0\left(C,\mathcal{O}_C(2p_i)\right)\geq 2$, for $i=1,\dots, n$}\right\}
\]
be the locus of hyperelliptic curves of genus $g$ with $n$ marked Weierstrass points. The locus $\mathcal{H}yp_{g,n}$ has codimension $g-2+n$ in  the moduli space $\MM_{g,n}$ of smooth curves of genus $g$ with $n$ marked points. 
The class of the closure $\overline{\mathcal{H}yp}_{g,n}$ is tautological on the moduli of stable curves $\BM_{g,n}$ (\cite{MR2120989}).

Let $\MM_{g,n}^{rt}$ be the moduli space of curves with rational tails.
From \cite{MR2120989} or \cite{MR2176546}, the tautological group $R^{g-2+n}(\MM^{rt}_{g,n})$ is one-dimensional. When $n=0$, $R^{g-2}(\MM_{g})$ is one-dimensional and is generated by the class of the hyperelliptic locus ${\mathcal{H}yp}_g$, or equivalently the class $\kappa_{g-2}$ (\cite{MR1346214} and \cite{MR1722541}). Let ${\mathcal{H}yp}^{rt}_{g,n}$ be the restriction of $\overline{\mathcal{H}yp}_{g,n}$ to $\MM^{rt}_{g,n}$. Since the push-forward of $[{\mathcal{H}yp}^{rt}_{g,n}]$ via the natural map $\MM^{rt}_{g,n}\rightarrow \MM_g$ is a positive multiple of $[{\mathcal{H}yp}_g]$, it follows that $[{\mathcal{H}yp}^{rt}_{g,n}]$ is non-zero and generates $R^{g-2+n}(\MM^{rt}_{g,n})$. 

Equivalently, $R^{g-2+n}(\MM^{rt}_{g,n})$ is generated by the decorated class $\delta_{g,\psi^{g-1}}$ defined in the following way.
Consider the glueing map 
\[
\xi\colon \BM_{g,1} \rightarrow \BM_{g,n}
\]
obtained by attaching a chain of $n-1$ rational components at the marked point of an element in $\BM_{g,1}$. We fix the markings in an increasing order, from the inner rational component to the outer one. From the rational equivalence of points in $\BM_{0,n+1}$, the classes of the loci obtained by permuting the markings on the image of $\xi$ are all rationally equivalent. The class $\delta_{g,\psi^{g-1}}$ is defined as the push-forward of the class $\psi^{g-1}$ in $R^{g-1}(\BM_{g,1})$ via the map $\xi$.  

Let $\pi\colon \BM_{g,1}\rightarrow \BM_g$ and $\pi_n\colon \BM_{g,n}\rightarrow \BM_{g,n-1}$ be the natural maps. Note that $(\pi_n)_* \delta_{g,\psi^{g-1}} = \delta_{g,\psi^{g-1}}$ in $R^{g-2+n-1}(\MM_{g,n-1})$ for $n\geq 3$, and $(\pi_2)_* \delta_{g,\psi^{g-1}} = \psi^{g-1}$. 
Since $\kappa_{g-2}:=\pi_*(\psi^{g-1})$ is non-zero in $R^{g-2}(\MM_g)$, we conclude $\delta_{g,\psi^{g-1}}$  is non-zero in $R^{g-2+n}(\MM^{rt}_{g,n})$. 

\begin{ex}
In the case $g=2$, for $2\leq n \leq 6$ we have
\[
\left[\mathcal{H}yp^{rt}_{2,n} \right] = \frac{6!}{2\cdot (6-n)!}\, \delta_{2,\psi} \in R^n(\MM_{2,n}^{rt}).
\]
Indeed, let us write $\left[{\mathcal{H}yp}^{rt}_{2,n}\right] = \alpha \, \delta_{2,\psi}$ in $R^n(\MM_{2,n}^{rt})$. In order to determine the coefficient $\alpha$, we intersect both sides of the equation with a test space. Let $C$ be a smooth curve of genus $2$, and let $C[n]$ be the $n$-th Fulton-MacPherson compactification of the space of $n$ distinct points of $C$ (\cite{MR1259368}).  The natural map $C[n+1]\rightarrow C[n]$ gives an $n$-dimensional family of genus-$2$ curves with rational tails. 
Weierstrass points on $C$ are ramification points of the hyperelliptic double covering.
Analyzing the Hurwitz space of admissible double coverings, it is easy to see that the intersection $[{\mathcal{H}yp}^{rt}_{2,n}] \cdot C[n]$ corresponds to all ordered $n$-tuples of Weierstrass points in $C$,
and is transversal. We deduce that $[{\mathcal{H}yp}^{rt}_{2,n}] \cdot C[n] = 6!/(6-n)!$. On the other hand, one has $\delta_{2,\psi} \cdot C[n] = 
\psi \cdot \xi^*(C[n]) = \psi \cdot C[1]  = 2$, whence the statement. 
\end{ex}

\section{A recursive argument}
\label{recS}

Let $N^k(\BM_{g,n})$ be the group of co\-di\-men\-sion-$k$ cycles on $\BM_{g,n}$ modulo numerical equivalence.
We denote by ${\rm Eff}^k(\BM_{g,n})\subset N^k(\BM_{g,n})$ the {\it cone  of effective cycle classes}, and by ${\rm REff}^k(\BM_{g,n})\subseteq {\rm Eff}^k(\BM_{g,n})$ the {\it sub-cone  of effective tautological classes} (see \cite{MR2120989} for tautological classes on $\BM_{g,n}$).

A cycle class $E$ inside a cone $K\subset N^k(\BM_{g,n})$ is called {\it extremal in} $K$ if whenever two cycle classes $E_1$ and $E_2$ in $K$ are such that $E= E_1+E_2$,  then both $E_1$ and $E_2$ lie in the ray spanned by $E$. An effective cycle class $E$ is called {\it rigid} if any effective cycle with class $m E$  is supported on the support of $E$.

\begin{thm}
\label{rec}
Given $g\geq 2$,
if $[\overline{\mathcal{H}yp}_{g,2}]$ is rigid and extremal in ${\rm REff}^g(\BM_{g,2})$,
then $[\overline{\mathcal{H}yp}_{g,n}]$ is rigid and extremal in ${\rm REff}^{g-2+n}(\BM_{g,n})$, for $3\leq n \leq 2g+2$. 
\end{thm}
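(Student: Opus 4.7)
The plan is to argue by induction on $n$, with the base case $n=2$ being the hypothesis. So I assume $[\overline{\mathcal{H}yp}_{g,n-1}]$ is rigid and extremal in ${\rm REff}^{g+n-3}(\BM_{g,n-1})$ and aim to deduce the same properties for $[\overline{\mathcal{H}yp}_{g,n}]$.

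The central tool is the forgetful map $\pi_k\colon\BM_{g,n}\to\BM_{g,n-1}$ dropping the $k$-th marked point. Its restriction to $\overline{\mathcal{H}yp}_{g,n}$ is generically finite of degree $d:=2g+3-n$ onto $\overline{\mathcal{H}yp}_{g,n-1}$, because given a hyperelliptic curve with $n-1$ marked Weierstrass points there are exactly $d$ remaining Weierstrass points with which to complete the marking. In particular,
\[
(\pi_k)_{\ast}\,[\overline{\mathcal{H}yp}_{g,n}]\;=\;d\cdot[\overline{\mathcal{H}yp}_{g,n-1}].
\]

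For extremality, suppose $[\overline{\mathcal{H}yp}_{g,n}]=[E_1]+[E_2]$ with $E_1,E_2$ effective tautological. Pushing forward via $\pi_k$ and invoking the extremality of $[\overline{\mathcal{H}yp}_{g,n-1}]$, we get $(\pi_k)_\ast[E_i]=a_i^{(k)}\cdot[\overline{\mathcal{H}yp}_{g,n-1}]$ with $a_i^{(k)}\geq 0$ and $a_1^{(k)}+a_2^{(k)}=d$. Applying the rigidity of $[\overline{\mathcal{H}yp}_{g,n-1}]$, whenever $a_i^{(k)}>0$ the support of $(\pi_k)_\ast E_i$ lies in $\overline{\mathcal{H}yp}_{g,n-1}$, so every irreducible component $Z$ of $E_i$ either is contracted by $\pi_k$ or satisfies $Z\subseteq\pi_k^{-1}(\overline{\mathcal{H}yp}_{g,n-1})$. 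Repeating this for every $k$ and using the set-theoretic equality
\[
\bigcap_{k=1}^{n}\pi_k^{-1}(\overline{\mathcal{H}yp}_{g,n-1})\;=\;\overline{\mathcal{H}yp}_{g,n}
\]
(a curve whose image under every forgetful map lies in $\overline{\mathcal{H}yp}_{g,n-1}$ must have all $n$ marked points Weierstrass), any component of $E_i$ horizontal for every $\pi_k$ is contained in $\overline{\mathcal{H}yp}_{g,n}$. Since $\overline{\mathcal{H}yp}_{g,n}$ is irreducible of the same codimension, such a component coincides with $\overline{\mathcal{H}yp}_{g,n}$, so $E_i$ is a non-negative multiple of $\overline{\mathcal{H}yp}_{g,n}$ as a cycle, yielding extremality. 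Rigidity follows from the same line of reasoning applied to an effective tautological $E$ with $[E]=m\cdot[\overline{\mathcal{H}yp}_{g,n}]$, using the rigidity of $[\overline{\mathcal{H}yp}_{g,n-1}]$ directly on $(\pi_k)_\ast E$.

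The main obstacle I anticipate is ruling out irreducible components of $E_i$ that are contracted by at least one $\pi_k$. Such a component $Z$ is necessarily a pullback $\pi_k^{-1}(W)$ for some $W\subset\BM_{g,n-1}$ of codimension $g+n-2$. To exclude these I would test against the one-parameter family $B_k\subset\BM_{g,n}$ obtained by fixing a general $[C,p_1,\ldots,\widehat{p}_k,\ldots,p_n]\in\overline{\mathcal{H}yp}_{g,n-1}$ and letting $p_k$ vary on $C$. By the projection formula every $\pi_k$-pullback component meets $B_k$ trivially, whereas $B_k\cdot[\overline{\mathcal{H}yp}_{g,n}]=d>0$, so the intersection $B_k\cdot[E_i]$ is accounted for entirely by the horizontal components of $E_i$. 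A careful bookkeeping of the horizontal divisors of $\pi_k^{-1}(\overline{\mathcal{H}yp}_{g,n-1})$ (besides $\overline{\mathcal{H}yp}_{g,n}$, the relevant boundary strata are the $\Delta_{0:\{k,i\}}$ restricted to $\pi_k^{-1}(\overline{\mathcal{H}yp}_{g,n-1})$), together with the $S_n$-symmetry of $[\overline{\mathcal{H}yp}_{g,n}]$ in the marked points, should force any vertical contribution to vanish and complete the argument.
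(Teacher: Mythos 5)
Your inductive setup, the push-forward relation $(\pi_k)_*[\overline{\mathcal{H}yp}_{g,n}]=(2g+3-n)\,[\overline{\mathcal{H}yp}_{g,n-1}]$, and the treatment of components that are horizontal for every forgetful map all match the paper's argument. (One caveat: the paper phrases that last step in terms of the general element of an irreducible component dominating $\overline{\mathcal{H}yp}_{g,n-1}$ under $\pi_1$, rather than via your set-theoretic identity $\bigcap_k\pi_k^{-1}(\overline{\mathcal{H}yp}_{g,n-1})=\overline{\mathcal{H}yp}_{g,n}$; the $n=2$ analogue of that identity is false by Lemma \ref{pi1*pi2}, and for $n\geq 3$ the intersection can still acquire boundary components, so the general-element formulation is the one to use. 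Your conclusion for horizontal components survives this correction.)

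The genuine gap is precisely the one you flag and do not close: components $Z$ that are contracted by some $\pi_k$ (equivalently, as your own dichotomy shows, by every $\pi_k$). Your proposed test-curve argument cannot work: for $g-2+n\geq 2$ the product of a curve class $[B_k]$ with a codimension-$(g-2+n)$ class lies in a group of negative dimension, hence is zero, so ``$B_k\cdot[\overline{\mathcal{H}yp}_{g,n}]=d>0$'' is not a meaningful intersection number --- the $d$ points of $B_k\cap\overline{\mathcal{H}yp}_{g,n}$ constitute an excess intersection. This is exactly why one-dimensional test families, which suffice for extremality of divisor classes, give no information about higher-codimension effective cones. The paper handles the contracted components by using the hypothesis that the classes are tautological (a hypothesis your argument never invokes, which is a warning sign, since the theorem is stated for ${\rm REff}$ and not ${\rm Eff}$): since $R^{g-2+n}(\MM^{rt}_{g,n})$ is one-dimensional, generated by $\delta_{g,\psi^{g-1}}$, one writes $[X_i]=c_i\,\delta_{g,\psi^{g-1}}+B_i$ with $B_i$ supported off the rational-tails locus, shows that vanishing of all $(\pi_j)_*[X_i]$ forces $c_i=0$, i.e.\ $[X_i]=0$ in $R^{g-2+n}(\MM^{rt}_{g,n})$, and then derives a contradiction from $[\overline{\mathcal{H}yp}_{g,n}]\neq 0$ in $R^{g-2+n}(\MM^{rt}_{g,n})$. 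Without a substitute for this step, your decomposition $[\overline{\mathcal{H}yp}_{g,n}]=(c_1+c_2)[\overline{\mathcal{H}yp}_{g,n}]+[V_1]+[V_2]$ leaves the vertical part completely unconstrained, and the proof is incomplete.
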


\begin{proof}
Let $n\geq 3$ and assume that the statement is true for $\overline{\mathcal{H}yp}_{g,n-1}$.
Suppose that 
\begin{eqnarray}
\label{sumaiXi}
\left[\overline{\mathcal{H}yp}_{g,n}\right] = \sum_i a_i [X_i],
\end{eqnarray}
with $a_i>0$, $X_i$ irreducible, tautological, effective of codimension $n$, and $[X_i]$ not proportional to $[\overline{\mathcal{H}yp}_{g,n}]$, for all $i$.

Since $R^{g-2+n}(\MM^{rt}_{g,n})$ is generated by $\delta_{g,\psi^{g-1}}$ (see \S \ref{Hypgnrt}), we can express the class of each $X_i$ as 
\[
[X_i]= c_i \, \delta_{g,\psi^{g-1}} + B_i, 
\]
where $c_i$ is a non-negative coefficient, and $B_i$ is a (not necessarily effective) cycle class in $R^{g-2+n}(\BM_{g,n})$ with $B_i =0$ in $R^{g-2+n}(\MM^{rt}_{g,n})$. 
Let $\pi_j \colon \BM_{g,n} \rightarrow \BM_{g,n-1}$ be the map obtained by forgetting the point~$j$. Applying $(\pi_j)_*$ to (\ref{sumaiXi}), we have
\begin{eqnarray}
\label{sumaipiXi}
(2g+2-(n-1))  \left[\overline{\mathcal{H}yp}_{g,n-1} \right] = \sum_i a_i (\pi_j)_*[X_i].
\end{eqnarray}

Pick a locus $X_i$ appearing on the right side of (\ref{sumaiXi}). Consider two cases. 
First, suppose $(\pi_1)_* [X_i] = \cdots = (\pi_n)_* [X_i] =0$. Note that 
$
(\pi_j)_* \delta_{g,\psi^{g-1}} = \delta_{g,\psi^{g-1}}
$
in $R^{g-2+n-1}(\MM^{rt}_{g,n-1})$, for all $j=1,\dots,n$. Since~$B_i =0$ in $R^{g-2+n}(\MM^{rt}_{g,n})$, using the exact sequence 
\[
A^{g-2+n}(\BM_{g,n}\setminus \MM^{rt}_{g,n}) \rightarrow A^{g-2+n}(\BM_{g,n}) \rightarrow A^{g-2+n}(\MM^{rt}_{g,n}) \rightarrow 0
\]  
we can assume that $B_i$ is represented by a linear combination of cycle classes supported in $\BM_{g,n}\setminus \MM^{rt}_{g,n}$. 
An element in the support of such a cycle does not have an irreducible and smooth component of genus~$g$, hence
$(\pi_j)_* B_i =0$  in $A^{g-2+n-1}(\MM^{rt}_{g,n-1})$, for all $j=1,\dots,n$. We deduce that $c_i=0$, that is, $[X_i]=0$ in $R^{g-2+n}(\MM^{rt}_{g,n})$.

For the other case, suppose $(\pi_1)_*[X_i]$ is nonzero. Since the class $[\overline{\mathcal{H}yp}_{g,n-1}]$ is rigid and extremal in ${\rm REff}^{g+n-3}(\BM_{g,n-1})$, from (\ref{sumaipiXi}) we deduce that $(\pi_1)_*[X_i]$ is a positive multiple of the class of $\overline{\mathcal{H}yp}_{g,n-1}$ and moreover $X_i\subset (\pi_1)^{-1}\overline{\mathcal{H}yp}_{g,n-1}$. This implies that $(\pi_2)_*[X_i], \dots, (\pi_{n})_*[X_i]$ are also nonzero. It follows that $X_i$ is in the intersection of all the $(\pi_j)^{-1}\overline{\mathcal{H}yp}_{g,n-1}$, for $j=1,\dots,n$. In particular, any $n-1$  marked points in a general element of $X_i$ are distinct Weierstrass points. 
Hence, all $n$ marked points must be distinct Weierstrass points. (Note that $n\geq 3$.)
This forces $[X_i]$ to be a positive multiple of $[\overline{\mathcal{H}yp}_{g,n}]$, a contradiction.

Finally, the above steps show that $[X_i]=0$ in $R^{g-2+n}(\MM^{rt}_{g,n})$, for all $i$. This yields a contradiction since $\left[\overline{\mathcal{H}yp}_{g,n}\right]\not=0$ in $R^{g-2+n}(\MM^{rt}_{g,n})$ (see \S \ref{Hypgnrt}), hence $[\overline{\mathcal{H}yp}_{g,n}]$ is extremal in ${\rm REff}^{g-2+n}(\BM_{g,n})$.

Suppose that $E:= m\, [\overline{\mathcal{H}yp}_{g,n}]$ is effective. Since $(\pi_j)_* (E)= (2g-n+3) m \, [\overline{\mathcal{H}yp}_{g,n-1}]$ and $ [\overline{\mathcal{H}yp}_{g,n-1}]$ is rigid, $(\pi_j)_* (E)$ is supported on $\overline{\mathcal{H}yp}_{g,n-1}$, for $j=1,\dots, n$. This implies that $E$ is supported on the intersection of all the $(\pi_j)^{-1}\overline{\mathcal{H}yp}_{g,n-1}$, for $j=1,\dots,n$. We conclude that $E$ is supported on $\overline{\mathcal{H}yp}_{g,n}$ and $[\overline{\mathcal{H}yp}_{g,n}]$ is rigid.
\end{proof}

\begin{rem}
The classes $[\overline{\mathcal{H}yp}_{3}]$, $[\overline{\mathcal{H}yp}_{3,1}]$, and $[\overline{\mathcal{H}yp}_{4}]$ are known to be extremal, respectively in ${\rm Eff}^1(\BM_{3})$ (\cite{MR2701950}), ${\rm Eff}^2(\BM_{3,1})$, and ${\rm Eff}^2(\BM_{4})$ (\cite{CC}). It is natural to wonder whether $[\overline{\mathcal{H}yp}_{g,n}]$ is extremal in ${\rm REff}^{g-2+n}(\BM_{g,n})$, for all $g\geq 2$ and $0\leq n \leq 2g+2$. By Theorem \ref{rec}, it is enough to study the cases~$n\leq 2$.
\end{rem}

\section{Loci of Weierstrass points on curves of genus \texorpdfstring{$2$}{2}}
\label{genus2}

In this section, we complete the proof of Theorem \ref{Hyp2nres}. It is enough to show that $[\overline{\mathcal{H}yp}_{2,n}]$ is rigid and extremal in ${\rm REff}^n(\BM_{2,n})$, and use the fact that ${\rm REff}^*(\BM_{2,n})= {\rm Eff}^*(\BM_{2,n})$ for small values of~$n$. Indeed, according to \cite{MR3179574} and \cite[Theorem~3.8]{Petersen}, all even cohomology of $\BM_{2,n}$ is tautological for $n<20$. Note that the Betti numbers of $\BM_{2,n}$ for $n\leq 7$ have been computed in \cite{MR1672112} and \cite{MR2538614}.

\subsection{The classes for \texorpdfstring{$n=1,2$}{n=1,2}}
\label{genus2classes}
When $n=1$, the class of the divisor $\overline{\mathcal{H}yp}_{2,1}$ in $\BM_{2,1}$ is
\begin{align}
\label{Hyp21}
\left[\overline{\mathcal{H}yp}_{2,1}\right] = 3 \omega - \frac{1}{10} \delta_{\rm irr} -\frac{6}{5}\delta_{1} = 3 \omega - \lambda -\delta_{1} \in {\rm Pic}(\BM_{2,1})
\end{align}
(\cite[Theorem 2.2]{MR910206}), and $[\overline{\mathcal{H}yp}_{2,1}]$ is rigid and extremal in ${\rm Eff}^1(\BM_{2,1})$ (\cite{MR2701950}).
When $n=2$, the class of the double ramification locus $\overline{\mathcal{H}yp}_{2,2}$ in $\BM_{2,2}$ is
\begin{align}
\label{Hyp22old}
\left[\overline{\mathcal{H}yp}_{2,2} \right]= 6 \psi_1 \cdot \psi_2 - \frac{3}{2} (\psi_1^2 + \psi_2^2) - (\psi_1 + \psi_2) \cdot \left( \frac{21}{10}\delta_{1:1} + \frac{3}{5}\delta_{1:0} + \frac{1}{20} \delta_{\rm irr}  \right) \in A^2(\BM_{2,2})
\end{align}
and $\overline{\mathcal{H}yp}_{2,2}$ is not a complete intersection (\cite{DR}). 
Expressing products of divisor classes in terms of decorated boundary strata classes, we have
\begin{align}
\label{Hyp22dbs}
\left[\overline{\mathcal{H}yp}_{2,2}\right] = 5\delta_{2,w}+9\delta_{11|} +\frac{5}{8} \delta_{01|} -\frac{1}{8} \left(\delta_{01|1} + \delta_{01|2} +\delta_{01|12} \right) +2 \gamma_{1:\emptyset} +\frac{1}{24} \delta_{00} .
\end{align}
Here, $\delta_{2,w}$ is the class of the locus of curves with a rational tail containing both marked points attached at a Weierstrass point on a component of genus $2$; $\delta_{11|}$ is the class of the locus of curves whose general element has two elliptic tails attached at a rational component containing both marked points; 
$\delta_{01|}$, $\delta_{01|1}$, $\delta_{01|2}$, $\delta_{01|12}$ are the classes of the loci of curves whose general element has an elliptic tail attached at a nodal rational component with the points distributed in the following way: for the class $\delta_{01|}$ both marked points are on the rational component, for $\delta_{01|i}$ the point $i$ is on the elliptic component and the other marked point is on the rational component, for $\delta_{01|12}$ both marked points are on the elliptic component;
$\gamma_{1:\emptyset}$ is the class of the locus of curves with an elliptic component meeting in two points a rational component containing both marked points;
finally, $\delta_{00}$ is the class of the locus whose general element is a rational curve with two non-disconnecting nodes.

In \S \ref{extrHyp22}, we will use in a crucial way the expression in (\ref{Hyp22dbs}).
In \S \ref{Hyp23}, we will also use the following description. Let $\pi_i\colon \BM_{2,2} \rightarrow \BM_{2,1}$ be the map obtained by forgetting the point $i$, for $i=1,2$. 

\begin{lem}
\label{pi1*pi2}
The following equality holds in $A^2(\BM_{2,2})$
\[
\pi_1^*\left[ \overline{\mathcal{H}yp}_{2,1}\right]  \cdot \pi_2^*\left[ \overline{\mathcal{H}yp}_{2,1} \right] = \left[\overline{\mathcal{H}yp}_{2,2}\right] + \gamma_{1:\emptyset} + \delta_{2,w}.
\]
In particular,  $\pi_1^{-1}(\overline{\mathcal{H}yp}_{2,1}) \cap \pi_2^{-1}(\overline{\mathcal{H}yp}_{2,1})$ is the union of 
the supports of $[\overline{\mathcal{H}yp}_{2,2}]$, $\gamma_{1:\emptyset}$, and $\delta_{2,w}$. 
\end{lem}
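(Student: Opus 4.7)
My plan is to establish the class identity by direct computation and then extract the set-theoretic statement from it. On $\BM_{2,2}$ the map $\rho_i$ forgetting all marked points but the $i$-th coincides with $\pi_{3-i}$, so we have $\pi_2^*\psi=\omega_1$ and $\pi_1^*\psi=\omega_2$ (with $\psi$ the cotangent class on $\BM_{2,1}$). Combined with $\pi_i^*\lambda=\lambda$, $\pi_i^*\delta_1=\delta_1$, and the divisor formula (\ref{Hyp21}), the left-hand side becomes
\begin{align*}
\pi_1^*\bigl[\overline{\mathcal{H}yp}_{2,1}\bigr]\cdot \pi_2^*\bigl[\overline{\mathcal{H}yp}_{2,1}\bigr] = (3\omega_2-\lambda-\delta_1)(3\omega_1-\lambda-\delta_1).
\end{align*}
I would expand this product into decorated codimension-two strata classes using $\omega_i=\psi_i-\delta_{0:\{1,2\}}$ and the standard intersection rules for boundary divisors on $\BM_{2,2}$.

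For the right-hand side, the formula (\ref{Hyp22dbs}) already presents $[\overline{\mathcal{H}yp}_{2,2}]$ as a sum of decorated strata, the class $\gamma_{1:\emptyset}$ is itself a decorated class, and $\delta_{2,w}$ can be identified with the pushforward $\iota_*[\overline{\mathcal{H}yp}_{2,1}]$ along the gluing map $\iota\colon\BM_{2,1}\to\BM_{2,2}$ that attaches a rational bridge carrying both marked points at the marked point of the genus-two curve. The identity then reduces to a comparison of tautological codimension-two classes, which I would verify either term by term in a common basis or, more economically, by testing against a sufficient collection of one-parameter families (e.g.\ a moving marked point on a fixed genus-two curve, boundary pencils supported in $\Delta_{0:\{1,2\}}$, and families inside $\Gamma_{1:\emptyset}$) to separate tautological codimension-two cycles.

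The set-theoretic statement follows from the class identity. The three cycles $\overline{\mathcal{H}yp}_{2,2}$, $\Gamma_{1:\emptyset}$ and $\Delta_{2,w}$ are irreducible of codimension two with pairwise distinct generic loci, so by effectivity of the intersection product its support is contained in their union. For the opposite containment, a generic element of $\overline{\mathcal{H}yp}_{2,2}$ has both marks Weierstrass; a generic element of $\Delta_{2,w}$ is sent by either forgetful map to a curve in $\overline{\mathcal{H}yp}_{2,1}$ because the rational tail contracts and the remaining point lands at the Weierstrass attachment; and a generic element of $\Gamma_{1:\emptyset}$ is sent by either forgetful map to a nodal ``banana'' curve admitting an admissible double cover ramified at the remaining marked point, hence to $\overline{\mathcal{H}yp}_{2,1}$.

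The main obstacle is the non-transversality of the two divisors along $\Delta_{2,w}$ and $\Gamma_{1:\emptyset}$: the left-hand product is an excess intersection there, and the multiplicity of each boundary component is not visible from the geometry alone. I would bypass the local excess calculation by carrying out the identity entirely at the class level via the decorated-strata presentation, where the multiplicities emerge as coefficients from the formula (\ref{Hyp22dbs}) and the pushforward expression for $\delta_{2,w}$.
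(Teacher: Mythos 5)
Your proposal is correct and follows essentially the same route as the paper: the class identity is obtained by direct computation from the known expressions \eqref{Hyp21} and \eqref{Hyp22old} (the paper expands the product of divisor classes and compares with \eqref{Hyp22old} rather than the decorated-strata form \eqref{Hyp22dbs}, but this is only a change of basis), and the set-theoretic statement is then read off from the identity together with the containment of the three loci in the intersection. One minor caution: to separate codimension-two classes numerically on the five-dimensional space $\BM_{2,2}$ you must pair against two-dimensional test families rather than one-parameter families, but your primary term-by-term verification in a common basis is sound.
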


\begin{proof}
The desired equality follows from \eqref{Hyp21} and \eqref{Hyp22old}. Since the supports of $[\overline{\mathcal{H}yp}_{2,2}]$, 
$\gamma_{1:\emptyset}$, and $\delta_{2,w}$ are contained in $\pi_i^{-1}(\overline{\mathcal{H}yp}_{2,1})$, for $i=1,2$, the statement follows. 
\end{proof}

Note that
\[
\delta_{2,w} = \delta_{0:2}\cdot \pi_1^*\left[ \overline{\mathcal{H}yp}_{2,1}\right]  = \delta_{0:2} \cdot (3 \omega_1 - \lambda - \delta_{1} ).
\]
Hence, we can write
\begin{eqnarray}
\label{Hyp22}
\left[\overline{\mathcal{H}yp}_{2,2}\right] & = & \pi_1^* (3 \omega_2 - \lambda -\delta_{1}) \cdot  \pi_2^* (3 \omega_1 - \lambda -\delta_{1})  
-\delta_{0:2} \cdot (3 \omega_1 - \lambda - \delta_{1} ) - \gamma_{1:\emptyset}\nonumber\\
&=& (3 \omega_2 - \lambda -\delta_{1}) \cdot (3 \omega_1 - \lambda -\delta_{1}) -\delta_{0:2} \cdot (3 \omega_1 - \lambda - \delta_{1} ) - \gamma_{1:\emptyset}.
\end{eqnarray}

\subsection{The extremality for \texorpdfstring{$n=2$}{n=2}} 
\label{extrHyp22}

By Theorem \ref{rec}, in order to show that $[\overline{\mathcal{H}yp}_{2,n}]$ is rigid and extremal in ${\rm REff}^n(\BM_{2,n})$ for $2\leq n \leq 6$, it is enough to show that $[\overline{\mathcal{H}yp}_{2,2}]$ is rigid and extremal in ${\rm Eff}^2(\BM_{2,2})$.

\begin{thm}
\label{g=n=2}
$[\overline{\mathcal{H}yp}_{2,2}]$ is rigid and extremal in ${\rm Eff}^2(\BM_{2,2})$. 
\end{thm}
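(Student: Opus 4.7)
The plan is to follow the template of the proof of Theorem \ref{rec}, but handle separately the additional boundary loci that arise at $n=2$, where the recursion of Theorem \ref{rec} breaks down because we have only two forgetful maps at our disposal. By \cite{MR3179574} and \cite[Theorem~3.8]{Petersen}, we have ${\rm Eff}^2(\BM_{2,2}) = {\rm REff}^2(\BM_{2,2})$, so it is enough to work in the tautological cone. Suppose for contradiction that $[\overline{\mathcal{H}yp}_{2,2}] = \sum_i a_i[X_i]$, with $a_i > 0$ and each $X_i$ an irreducible effective tautological codimension-$2$ cycle whose class is not proportional to $[\overline{\mathcal{H}yp}_{2,2}]$.

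Mimicking the proof of Theorem \ref{rec}, I would write $[X_i] = c_i\,\delta_{2,\psi} + B_i$ in $R^2(\BM_{2,2})$ with $B_i$ supported on $\BM_{2,2}\setminus \MM^{rt}_{2,2}$, using that $R^2(\MM^{rt}_{2,2})$ is spanned by $\delta_{2,\psi}$. Since $[\overline{\mathcal{H}yp}_{2,2}]$ restricts to a positive multiple of $\delta_{2,\psi}$ on $\MM^{rt}_{2,2}$, some $c_i > 0$. For such an $X_i$, the pushforwards $(\pi_j)_*[X_i]$ are nonzero effective codimension-one classes which sum (over $i$) to $5\,[\overline{\mathcal{H}yp}_{2,1}]$. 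Extremality and rigidity of $[\overline{\mathcal{H}yp}_{2,1}]$ imply each $(\pi_j)_*[X_i]$ is a positive multiple of $[\overline{\mathcal{H}yp}_{2,1}]$, forcing $X_i \subset \pi_j^{-1}(\overline{\mathcal{H}yp}_{2,1})$ for both $j = 1, 2$. By Lemma \ref{pi1*pi2}, $X_i$ is then an irreducible component of $\overline{\mathcal{H}yp}_{2,2} \cup \Gamma_{1:\emptyset} \cup \Delta_{2,w}$. The first option is excluded by hypothesis; the second is ruled out because $\Gamma_{1:\emptyset} \subset \BM_{2,2}\setminus \MM^{rt}_{2,2}$ forces $c_i = 0$, contradicting $c_i > 0$; hence $X_i = \Delta_{2,w}$.

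The main obstacle is then to exclude $\Delta_{2,w}$, together with the remaining $X_i$'s supported in $\BM_{2,2}\setminus \MM^{rt}_{2,2}$. My approach is, for each such candidate locus $X$, to construct a moving family of curves $B_X$ sweeping out $X$ with $B_X \cdot [X] > 0$ and $B_X \cdot [\overline{\mathcal{H}yp}_{2,2}] = 0$ (the latter holding whenever the generic member of $B_X$ lies outside $\overline{\mathcal{H}yp}_{2,2}$); pairing the decomposition with such a $B_X$ then forces the coefficient of $[X]$ to vanish. For $X = \Delta_{2,w}$, a natural candidate is the family obtained by varying the genus-$2$ component along a Kodaira-type pencil while keeping the Weierstrass attachment point and the rigid rational tail carrying both marks; since $\Delta_{2,w} \not\subset \overline{\mathcal{H}yp}_{2,2}$, the generic fiber avoids $\overline{\mathcal{H}yp}_{2,2}$. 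Analogous families tailored to $\Gamma_{1:\emptyset}$ and to the remaining codimension-$2$ strata appearing in the decomposition (\ref{Hyp22dbs}) should dispose of the other candidates; the intersection computations are the delicate computational heart of the argument.

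Rigidity is obtained by a parallel argument: for any effective $E$ with $[E] = m\,[\overline{\mathcal{H}yp}_{2,2}]$, the same pushforward analysis combined with rigidity of $[\overline{\mathcal{H}yp}_{2,1}]$ confines the support of $E$ to $\overline{\mathcal{H}yp}_{2,2} \cup \Gamma_{1:\emptyset} \cup \Delta_{2,w}$, and the covering-curve analysis excludes contributions supported on $\Gamma_{1:\emptyset}$ or $\Delta_{2,w}$, forcing ${\rm Supp}(E) \subset \overline{\mathcal{H}yp}_{2,2}$.
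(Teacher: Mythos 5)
The first half of your argument---pushing forward along $\pi_1,\pi_2$, invoking rigidity and extremality of $[\overline{\mathcal{H}yp}_{2,1}]$, and using Lemma \ref{pi1*pi2} to conclude that any $X_i$ with nonzero pushforwards must be $\Delta_{2,w}$---is essentially the paper's argument, and your rigidity paragraph also matches the paper. The gap is in the second half, where the remaining candidates must be excluded, and there your proposed mechanism is dimensionally ill-posed: a ``moving family of curves $B_X$'' (your concrete example, a pencil of genus-$2$ tails with a fixed Weierstrass attachment point, is indeed a one-dimensional family) gives a curve class in the five-dimensional space $\BM_{2,2}$, and a curve class cannot be paired numerically with a codimension-two class: the expected dimension of the intersection is $1+3-5=-1$. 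A duality argument against codimension-two classes would require \emph{two-dimensional} cycles that are nef against all effective codimension-two classes, vanish on $[\overline{\mathcal{H}yp}_{2,2}]$, and are positive on the given candidate; none of this is constructed, and since $\delta_{2,w}$, $\delta_{11|}$, $\gamma_{1:\emptyset}$, $\delta_{01|}$, $\delta_{00}$ all appear with \emph{positive} coefficients in \eqref{Hyp22dbs}, any such dual class would be forced to be strictly positive on $\delta_{01|1}+\delta_{01|2}+\delta_{01|12}$ to compensate, so the candidates cannot plausibly be disposed of ``one at a time'' by classes that simply avoid $\overline{\mathcal{H}yp}_{2,2}$.

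You are also missing the classification of the loci killed by both pushforwards: having $(\pi_1)_*[X_i]=(\pi_2)_*[X_i]=0$ does not mean $X_i$ is one of ``the strata appearing in \eqref{Hyp22dbs}''---a priori it could be any effective codimension-two cycle. The paper shows that such an $X_i$ is either contained in $\pi^{-1}$ of an effective curve on $\BM_2$ (whose cone is spanned by the two one-dimensional boundary strata) or has a general member with a rational component carrying both marks and two nodes, which pins $[X_i]$ into the cone spanned by $\delta_{01|}$, $\delta_{01|1}$, $\delta_{01|2}$, $\delta_{01|12}$, $\delta_{00}$, $\delta_{11|}$, $\gamma_{1:\emptyset}$. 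The actual heart of the proof is then a sign observation, absent from your proposal: the expression \eqref{Hyp22dbs} of $[\overline{\mathcal{H}yp}_{2,2}]$ in decorated boundary strata classes has coefficient $-\tfrac{1}{8}$ on $\delta_{01|1}+\delta_{01|2}+\delta_{01|12}$, so $[\overline{\mathcal{H}yp}_{2,2}]$ lies outside the cone generated by $\delta_{2,w}$ and these strata, which finishes the extremality at once without any auxiliary test families.
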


\begin{proof}
Suppose that
\begin{eqnarray}
\label{22sumi}
\left[\overline{\mathcal{H}yp}_{2,2}\right] = \sum_i a_i [X_i],
\end{eqnarray}
where $a_i > 0$ and $X_i$ is an irreducible codimension-two effective cycle on $\BM_{2,2}$ with $[X_i]$ not proportional to $[\overline{\mathcal{H}yp}_{2,2}]$, for all $i$. Let $\pi_j\colon \BM_{2,2} \rightarrow \BM_{2,1}$ be the map forgetting the point $j$, for $j=1,2$, and $\pi \colon \BM_{2,2} \rightarrow \BM_{2}$ be the map forgetting both marked points. Applying $(\pi_j)_*$ to (\ref{22sumi}), we obtain
\begin{eqnarray}
\label{22sumipij}
5\left[\overline{\mathcal{H}yp}_{2,1}\right] = \sum_i a_i (\pi_1)_*[X_i].
\end{eqnarray}

Pick a locus $X_i$ appearing on the right side of (\ref{22sumi}).
If $(\pi_1)_*[X_i] = (\pi_2)_*[X_i] = 0$, then either $X_i$ is contained in the inverse image via $\pi$ of a codimension-two effective cycle on $\BM_2$, or a general point of $X_i$ contains a smooth rational component with two marked points and two singular points. 
Note that a codimension-two locus in $\BM_2$ is a curve, and the cone of effective curves in $\BM_2$ is known to be spanned by the two one-dimensional boundary strata.
We deduce that $[X_i]$ is in the cone generated by the boundary strata classes $\delta_{01|}$, $\delta_{01|1}$, $\delta_{01|2}$, $\delta_{01|12}$, $\delta_{00}$, $\delta_{11|}$, and $\gamma_{1:\emptyset}$.

Suppose $(\pi_1)_*[X_i]$ is nonzero. Since $[\overline{\mathcal{H}yp}_{2,1}]$ is rigid and extremal in ${\rm Eff}^1(\BM_{2,1})$, from (\ref{22sumipij}) we deduce that $X_i\subset \pi_1^{-1}(\overline{\mathcal{H}yp}_{2,1})$. Hence, $(\pi_2)_*[X_i]$ is also nonzero, and $X_i\subset \pi_1^{-1}(\overline{\mathcal{H}yp}_{2,1}) \cap \pi_2^{-1}(\overline{\mathcal{H}yp}_{2,1})$. From Lemma \ref{pi1*pi2} we conclude that $[X_i]$ is supported on the locus of curves with a rational tail containing both marked points attached at a Weierstrass point of a genus-two curve, hence $[X_i]$ is a positive multiple of~$\delta_{2,w}$.

From (\ref{Hyp22dbs}), the class of $\overline{\mathcal{H}yp}_{2,2}$ lies outside the cone generated by $\delta_{2,w}$, $\delta_{01|}$, $\delta_{01|1}$, $\delta_{01|2}$, $\delta_{01|12}$, $\delta_{00}$, $\delta_{11|}$, and $\gamma_{1:\emptyset}$. Indeed, the coefficient of $\delta_{01|1}+\delta_{01|2}+\delta_{01|12}$ is negative. Hence $[\overline{\mathcal{H}yp}_{2,2}]$ is extremal in ${\rm Eff}^2(\BM_{2,2})$. 

The rigidity of $[\overline{\mathcal{H}yp}_{2,2}]$ follows from a similar argument. Suppose that $E:=m\, [\overline{\mathcal{H}yp}_{2,2}]$ is effective, for some positive $m$. Since $(\pi_j)_* (E)=5m\, [\overline{\mathcal{H}yp}_{2,1}]$ and $[\overline{\mathcal{H}yp}_{2,1}]$ is rigid, we have that $(\pi_j)_* (E)$ is supported on $\overline{\mathcal{H}yp}_{2,1}$, for $j=1,2$. Hence, the support of $E$ is in $\pi_1^{-1}(\overline{\mathcal{H}yp}_{2,1}) \cap \pi_2^{-1}(\overline{\mathcal{H}yp}_{2,1})$. From Lemma \ref{pi1*pi2}, $E$ is supported on the union of the loci $\overline{\mathcal{H}yp}_{2,2}$, $\Gamma_{1:\emptyset}$, and $\Delta_{2,w}$.  Since $E=m\, [\overline{\mathcal{H}yp}_{2,2}]$,  $E$ is supported only on $\overline{\mathcal{H}yp}_{2,2}$, and the statement follows.
\end{proof}

\section{The class of \texorpdfstring{$\overline{\mathcal{H}yp}_{2,3}$}{Hyp23}}
\label{Hyp23}

The aim of this section is to compute the class of $\overline{\mathcal{H}yp}_{2,3}$ in $A^3(\BM_{2,3})$. We first discuss a recursive relation between the classes of a partial closure of ${\mathcal{H}yp}_{2,n}$ and ${\mathcal{H}yp}_{2,n-1}$.

Recall the map $\pi_i\colon \BM_{g,n}\rightarrow \BM_{g,n-1}$ obtained by forgetting the $i$-th marked point, and the map $\rho_i\colon \BM_{g,n}\rightarrow \BM_{g,1}$ obtained by forgetting all but the $i$-th marked point.

\subsection{A recursive relation}
Let $\BM^{o}_{g,n}$ be the open locus in $\BM_{g,n}$ of stable curves with at most one non-disconnecting node. Let $\overline{\mathcal{H}yp}^{o}_{g,n}$ be the closure of ${\mathcal{H}yp}_{g,n}$ in $\BM^{o}_{g,n}$.
For $2\leq n \leq 6$, we note the following identity in $A^{n}(\BM^{o}_{2,n})$
\begin{align}
\label{Hyp2no}
\pi_n^*\left(\overline{\mathcal{H}yp}^{o}_{2,n-1}\right) \cdot \rho_n^*\left(\overline{\mathcal{H}yp}_{2,1}\right) \equiv \overline{\mathcal{H}yp}^{o}_{2,n} + 
\sum_{i=1}^{n-1} \pi_n^*\left( \overline{\mathcal{H}yp}^{o}_{2,n-1} \right)\cdot \delta_{0:\{i,n\}}.
\end{align}

Indeed, the intersection on the left-hand side consists of genus-two curves with a choice of $n$ ordered Weierstrass points, the first $n-1$ being distinct. The component $\overline{\mathcal{H}yp}^{o}_{2,n}$ corresponds to curves with all $n$ points distinct, and the component 
$\pi_n^*( \overline{\mathcal{H}yp}^{o}_{2,n-1} )\cdot \delta_{0:\{i,n\}}$ corresponds to curves with the $n$-th point coinciding with the $i$-th point, for $i=1,\dots,n-1$. A Weierstrass point on a smooth hyperelliptic curve of genus $g$ has weight $g(g-1)/2$. This explains the coefficient of $\overline{\mathcal{H}yp}^{o}_{2,n}$. Since the right-hand side is symmetric with respect to the first $n-1$ points, it is clear that all the components $\pi_n^*( \overline{\mathcal{H}yp}^{o}_{2,n-1})\cdot \delta_{0:\{i,n\}}$ have equal multiplicity, which, forgetting the point $n$, must equal $1$.

Using (\ref{Hyp2no}), one can recursively express the class of $\overline{\mathcal{H}yp}^{o}_{2,n}$ in terms of products of divisor classes. In the following, we derive a complete expression for the class of $\overline{\mathcal{H}yp}_{2,3}$ in $A^3(\BM_{2,3})$.

\subsection{A set-theoretic description}
To extend (\ref{Hyp2no}) with $n=3$ over $\BM_{2,3}$, we need to consider loci of curves with at least two non-disconnecting nodes.
Let $\Xi_i$ be the closure of the locus of curves with an elliptic component $[E, p_i, x,y]$ such that $2p_i\sim x+y$, and a rational component containing the other two marked points $p_j, p_k$, and meeting $E$ at the points $x,y$.
Let $\Theta$ be the closure of the locus of curves whose general element has a rational component $[R, p_1, p_2,  p_3, x, y]$ such that $2p_1 \sim 2p_2 \sim x+y$, and an elliptic component meeting $R$ at the points $x,y$.

\begin{prop}
\label{intHyp23}
We have
\begin{align*}
\pi_3^* \left(\overline{\mathcal{H}yp}_{2,2}\right) \cap \rho_3^* \left(\overline{\mathcal{H}yp}_{2,1}\right)  =  \overline{\mathcal{H}yp}_{2,3} \cup  
\pi_3^*\left( \overline{\mathcal{H}yp}_{2,2} \right)_{\big| \Delta_{0:\{1,3\}}} \cup  \pi_3^*\left( \overline{\mathcal{H}yp}_{2,2} \right)_{\big| \Delta_{0:\{2,3\}}}
\cup \Xi_1 \cup \Xi_2 \cup \Theta. 
\end{align*}
\end{prop}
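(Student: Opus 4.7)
The plan is to prove the two set-theoretic inclusions separately, stratifying $\BM_{2,3}$ by the open locus $\BM^{o}_{2,3}$ of curves with at most one non-disconnecting node. For the inclusion $\supseteq$, I would check each of the six listed pieces. The containment $\overline{\mathcal{H}yp}_{2,3} \subseteq \pi_3^{-1}(\overline{\mathcal{H}yp}_{2,2}) \cap \rho_3^{-1}(\overline{\mathcal{H}yp}_{2,1})$ is immediate from the definitions. For $\pi_3^*(\overline{\mathcal{H}yp}_{2,2})_{\big| \Delta_{0:\{i,3\}}}$ the first containment is definitional, and the second follows because under $\rho_3$ the rational tail carrying $p_i$ and $p_3$ contracts and $p_3$ lands on a Weierstrass point of the genus-two component, inherited from the $\overline{\mathcal{H}yp}_{2,2}$ condition. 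For $\Xi_i$ and $\Theta$ I would produce explicit admissible double covers: the defining relation $2p_i \sim x+y$ on the elliptic component of a member of $\Xi_i$ (respectively $2p_1 \sim 2p_2 \sim x+y$ on the rational component of a member of $\Theta$) provides an involution on one component that glues with the natural degree-two cover of the complementary component, realizing the curve as an admissible hyperelliptic cover with the requisite marked ramification.

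For the inclusion $\subseteq$, I would treat the two strata separately. Over $\BM^{o}_{2,3}$ the identity \eqref{Hyp2no} exhibits the intersection set-theoretically as $\overline{\mathcal{H}yp}^{o}_{2,3}$ together with the two divisor restrictions $\pi_3^*(\overline{\mathcal{H}yp}^{o}_{2,2})_{\big| \Delta_{0:\{i,3\}}}$ for $i=1,2$. Over the complement $\BM_{2,3} \setminus \BM^{o}_{2,3}$, stable curves have at least two non-disconnecting nodes, which in genus two restricts the dual graph to three topological types: an irreducible rational curve with two self-nodes, a banana of two elliptic components, or a banana of an elliptic and a rational component. I would rule out the first two types as not contributing positive-dimensional components of the intersection --- either because no admissible double cover accommodates three prescribed ramification points under both forgetful maps simultaneously, or because any such element specializes into a locus already on the list. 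The third type, partitioned by the distribution of the three marked points between the components, yields exactly $\Xi_1$, $\Xi_2$, and $\Theta$; the 2-torsion conditions in their definitions emerge precisely as the compatibility requirements for the admissible covers underlying membership in $\overline{\mathcal{H}yp}_{2,2}$ via $\pi_3$ and in $\overline{\mathcal{H}yp}_{2,1}$ via $\rho_3$.

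The main obstacle is this last boundary analysis. One must carefully track how $\pi_3$ and $\rho_3$ contract unstable rational subcurves after forgetting marked points, and then use the Hurwitz-space description of the hyperelliptic loci to translate membership into explicit torsion conditions on the elliptic and rational components. The delicate part is verifying that the torsion relations emerging from admissibility at the two nodes are exactly those defining $\Xi_i$ and $\Theta$, while simultaneously ruling out any spurious component from other combinatorial types of admissible covers on a banana configuration.
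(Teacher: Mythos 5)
Your overall strategy is the same as the paper's: identify the intersection generically as curves whose three marked points are Weierstrass points with $p_1\neq p_2$, split according to whether $p_3$ collides with $p_1$ or $p_2$, and then analyze the deeper boundary via admissible covers to find $\Xi_1$, $\Xi_2$, $\Theta$. The paper does this in one pass rather than as two separate inclusions, but that is a cosmetic difference.

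There is, however, a concrete error in the step you yourself single out as the crux. Your enumeration of topological types with at least two non-disconnecting nodes is wrong: a ``banana of two elliptic components'' has arithmetic genus $1+1+2-1=3$, so it does not occur in $\BM_{2,3}$ at all, and meanwhile you omit genuine genus-two configurations, most notably two rational components meeting at three points (the theta graph) and two one-nodal rational components joined by a bridge (the dumbbell). These cannot be dismissed on dimension grounds: since $\pi_3^*\bigl(\overline{\mathcal{H}yp}_{2,2}\bigr)\cap\rho_3^*\bigl(\overline{\mathcal{H}yp}_{2,1}\bigr)$ is cut out by cycles of codimensions $2$ and $1$ in the six-dimensional $\BM_{2,3}$, every component of the intersection has dimension at least $3$, and the theta and dumbbell strata (for any distribution of the three marked points) have dimension exactly $3$, so each is a priori a candidate component and must be shown either not to lie entirely in the intersection or to lie in the closure of a locus already on the list. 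The paper is admittedly terse here too (``restricting the intersection to the codimension-two boundary strata and using admissible covers\dots''), but a correct write-up of your plan needs the correct list of dual graphs before the admissible-cover analysis can be carried out; as written, your case analysis would examine a nonexistent stratum and skip existing ones. The rest of the proposal --- the $\supseteq$ inclusion via explicit admissible double covers for $\Xi_i$ and $\Theta$, and the use of \eqref{Hyp2no} over $\BM^{o}_{2,3}$ --- is sound and matches the paper.
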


\begin{proof}
The intersection 
\begin{eqnarray*}
\pi_3^* \left(\overline{\mathcal{H}yp}_{2,2}\right) \cap \rho_3^* \left(\overline{\mathcal{H}yp}_{2,1}\right)
\end{eqnarray*}
consists of stable curves $[C,p_1, p_2, p_3]$ with three marked Weierstrass points and with $p_1$ and $p_2$ corresponding to two different Weierstrass points. If the three points correspond to three different Weierstrass points, then $[C,p_1, p_2, p_3]$ is in $\overline{\mathcal{H}yp}_{2,3}$. If $p_3$ and $p_1$ correspond to the same Weierstrass point, then $[C,p_1, p_2, p_3]$ is in the restriction of $\pi_3^*\left( \overline{\mathcal{H}yp}_{2,2} \right)$ to $\Delta_{0:\{1,3\}}$. The case when $p_3$ and $p_2$ correspond to the same Weierstrass point is similar. Finally, restricting the intersection to the codimension-two boundary strata and using admissible covers to describe Weierstrass points on singular curves, we deduce that $\Xi_1$, $\Xi_2$, and $\Theta$ are the only additional components contained in the intersection, hence the statement.
\end{proof}

\subsection{The multiplicities}
Since the left-hand side of the expression in Proposition \ref{intHyp23} is symmetric with respect to the first two marked points, we conclude that the contributions of $\pi_3^*\left[ \overline{\mathcal{H}yp}_{2,2} \right]\cdot \delta_{0:\{1,3\}}$ and $\pi_3^*\left[ \overline{\mathcal{H}yp}_{2,2} \right]\cdot{ \delta_{0:\{2,3\}}}$ on the right-hand side are equal. Similarly for $\Xi_1$ and $\Xi_2$.
Hence we have
\begin{multline}
\label{abgd}
\pi_3^* \left[\overline{\mathcal{H}yp}_{2,2}\right] \cdot \rho_3^* \left[\overline{\mathcal{H}yp}_{2,1}\right]  = \\
 \alpha \left[\overline{\mathcal{H}yp}_{2,3}\right] 
 + \beta  \left( \delta_{0:\{1,3\}} + \delta_{0:\{2,3\}} \right) \cdot \pi_3^*\left[ \overline{\mathcal{H}yp}_{2,2} \right] 
 + \gamma (\left[ \Xi_1\right] + \left[ \Xi_2\right]) + \delta \left[ \Theta \right]
\end{multline}
for some coefficients $\alpha, \beta, \gamma, \delta$.

Forgetting the first marked point in (\ref{abgd}), the left-hand side is
\[
5 \left( \pi_1^*\left[ \overline{\mathcal{H}yp}_{2,1} \right] \cdot \pi_2^*\left[ \overline{\mathcal{H}yp}_{2,1} \right] \right) = 5 \left(\left[ \overline{\mathcal{H}yp}_{2,2} \right] + \gamma_{1:\emptyset} + \delta_{2,w} \right)
\]
by Lemma \ref{pi1*pi2},
hence we have
\[
5 \left(\left[ \overline{\mathcal{H}yp}_{2,2} \right] + \gamma_{1:\emptyset} + \delta_{2,w} \right) = (4\alpha+\beta) \left[ \overline{\mathcal{H}yp}_{2,2} \right] + (4\gamma + \delta) \gamma_{1:\emptyset} + 5\beta \, \delta_{2,w}.
\]
We deduce
\begin{align*}
\alpha & = \beta = 1, &  4\gamma + \delta &=5.
\end{align*}

In order to compute $\gamma$ and $\delta$, we consider the restriction of (\ref{abgd}) to the following three-dimensional test family. Attach at two points of an elliptic curve $E$ a rational tail containing the points marked by $2$ and $3$, and an elliptic tail $F$ containing the point marked by $1$. Consider the family obtained by varying $E$ in a pencil of degree $12$, by varying the point of intersection with the rational tail on the central elliptic component in which it lies, and by varying the point marked by $1$ on $F$ (see Figure \ref{fig:figu}). The base of this family is $Y\times F$, where $Y$ is the blow-up of $\mathbb{P}^2$ at the nine points of intersection of two general cubics.

 \begin{figure}[htbp]
 \centering
 \def\svgwidth{0.5\columnwidth}
 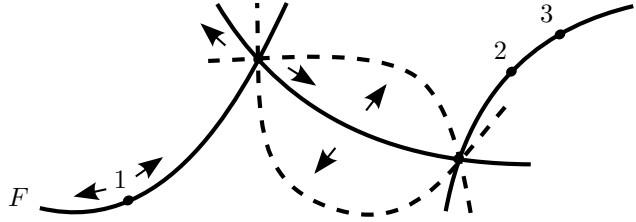
 \caption{How the general element of the family moves.}
 \label{fig:figu}
 \end{figure}

Let $H$ be the pull-back of the hyperplane class in $\mathbb{P}^2$, let $\Sigma$ be the sum of the nine exceptional divisors, and let $E_0$ be one of them. Denote by $\pi \colon Y\times F\rightarrow F$ the natural projection, and let $q=E\cap F$ be the singular point on $F$. The divisor classes on $\BM_{2,3}$ restrict as follows
\begin{eqnarray*}
\psi_1 &=& \pi ^* (q),\\
\delta_{\rm irr}  &=& 36 H -12 \Sigma \quad = \quad 12 \lambda,\\
\delta_{0:\{2,3\}} &=& {}-3H + \Sigma - E_0,\\
\delta_{1: \{1\}} &=& {}-3H + \Sigma - E_0 -\pi ^*(q),\\
\delta_{1:0} &=& E_0 + \pi ^*(q).
\end{eqnarray*}
From (\ref{Hyp21}) and (\ref{Hyp22old}), it follows that
\begin{multline*}
\rho_3^* \left[\overline{\mathcal{H}yp}_{2,1}\right] \cdot \pi_3^* \left[\overline{\mathcal{H}yp}_{2,2}\right]  = -\left( 3 \delta_{0:\{2,3\}} +\frac{1}{10} \delta_{\rm irr} + \frac{6}{5} (\delta_{1:\{1\}} + \delta_{1:0})  \right) \\
{}  \cdot \left(-6\psi_1 \cdot \delta_{0:\{2,3\}} -\frac{3}{2} \delta_{0:\{2,3\}}^2 -(\psi_1-\delta_{0:\{2,3\}})\cdot\left( \frac{21}{10}\delta_{1:\{1\}} + \frac{3}{5} \delta_{1:0} + \frac{1}{20} \delta_{\rm irr} \right) \right)
= 27, 
\end{multline*}
and similarly
\begin{align*}
\pi_3^*\left[ \overline{\mathcal{H}yp}_{2,2} \right]\cdot \delta_{0:\{2,3\}}   = -9.
\end{align*}
Note that this family meets $\Xi_1$ when $E$ degenerates to one of the $12$ rational nodal fibers, the rational tail is attached at a point colliding with the non-disconnecting node, and the point marked by $1$ differs from $q$ in $F$ by a nontrivial torsion point of order $2$ in ${\rm Pic}^0(F)$. The intersection is transverse at each point, hence we have
\[
\Xi_1=12 \cdot 3.
\]
All other classes in (\ref{abgd}) are disjoint from this family. We deduce the following relation
\[
27 = {}-9 \beta + 36 \gamma,
\]
hence we conclude that $\alpha = \beta = \gamma = \delta = 1$. We have thus proved the following statement.

\begin{prop}
\label{Hyp23-1}
One has
\[
  \left[\overline{\mathcal{H}yp}_{2,3}\right] = 
\pi_3^* \left[\overline{\mathcal{H}yp}_{2,2}\right] \cdot \left( \rho_3^* \left[\overline{\mathcal{H}yp}_{2,1}\right] 
 - \delta_{0:\{1,3\}} - \delta_{0:\{2,3\}}  \right)
 -  \left[ \Xi_1\right] - \left[ \Xi_2\right] -  \left[ \Theta \right].
\]
\end{prop}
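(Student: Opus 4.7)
The plan is to upgrade the set-theoretic equality in Proposition \ref{intHyp23} to a scheme-theoretic identity of cycle classes, by determining the multiplicity of each component appearing in the intersection. Concretely, I would start from the equation
\begin{align*}
\pi_3^* [\overline{\mathcal{H}yp}_{2,2}] \cdot \rho_3^* [\overline{\mathcal{H}yp}_{2,1}] &= \alpha\, [\overline{\mathcal{H}yp}_{2,3}] + \beta \bigl( \delta_{0:\{1,3\}} + \delta_{0:\{2,3\}}\bigr)\cdot \pi_3^*[\overline{\mathcal{H}yp}_{2,2}] \\
&\quad {}+ \gamma ([\Xi_1]+[\Xi_2]) + \delta\, [\Theta]
\end{align*}
given by \eqref{abgd}, where the $S_2$-symmetry in the markings $1,2$ has already been used to collapse the number of unknowns to four. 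Solving for $[\overline{\mathcal{H}yp}_{2,3}]$ then amounts to computing $\alpha,\beta,\gamma,\delta$.

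First I would determine $\alpha$ and $\beta$ by applying the pushforward $(\pi_1)_*$ to both sides. On the left, compatibility of pullback with pushforward, together with the identity $(\pi_1)_*\rho_3^*=\rho_2^*(\pi_1)_*$ and the fact that $(\pi_1)_*\pi_3^*[\overline{\mathcal{H}yp}_{2,2}]$ is a multiple of $[\overline{\mathcal{H}yp}_{2,1}]$ coming from the five-fold forgetful symmetry on Weierstrass points, reduces the computation to the product $\pi_1^*[\overline{\mathcal{H}yp}_{2,1}]\cdot \pi_2^*[\overline{\mathcal{H}yp}_{2,1}]$ on $\BM_{2,2}$, which by Lemma~\ref{pi1*pi2} equals $[\overline{\mathcal{H}yp}_{2,2}]+\gamma_{1:\emptyset}+\delta_{2,w}$. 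On the right, $\pi_1$ restricted to $\Xi_1,\Xi_2,\Theta$ and to the boundary terms produces contributions supported on $\gamma_{1:\emptyset}$ and on $\delta_{2,w}$. Matching coefficients of $[\overline{\mathcal{H}yp}_{2,2}]$, $\gamma_{1:\emptyset}$, $\delta_{2,w}$ forces $\alpha=\beta=1$ and the linear relation $4\gamma+\delta=5$.

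To pin down $\gamma$ and $\delta$ individually, I would intersect \eqref{abgd} with a well-chosen three-dimensional test family in $\BM_{2,3}$. The family constructed in the excerpt, with base $Y\times F$ where $Y$ is the blow-up of $\mathbb{P}^2$ at the nine base points of a pencil of cubics, meets $\Xi_1$ but avoids $\Xi_2$, $\Theta$, and $\overline{\mathcal{H}yp}_{2,3}$. Using the listed restrictions of the divisor classes $\psi_1,\delta_{\rm irr},\delta_{0:\{2,3\}},\delta_{1:\{1\}},\delta_{1:0}$ to $Y\times F$, one computes the intersection numbers with $\rho_3^*[\overline{\mathcal{H}yp}_{2,1}]\cdot \pi_3^*[\overline{\mathcal{H}yp}_{2,2}]$ and with $\delta_{0:\{2,3\}}\cdot \pi_3^*[\overline{\mathcal{H}yp}_{2,2}]$, obtaining $27$ and $-9$ respectively, while $\Xi_1$ contributes $36$ via the $12$ nodal fibers of the cubic pencil times the $3$ non-trivial $2$-torsion points on $F$. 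The equation $27=-9\beta+36\gamma$ yields $\gamma=1$, and then $\delta=1$ from $4\gamma+\delta=5$.

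The main obstacle is the $\Xi_1$-intersection count: it requires identifying the transverse meeting points of the test family with $\Xi_1$ as pairs (nodal cubic fiber, non-trivial $2$-torsion point on $F$), which in turn rests on interpreting the limit of the condition $2p_1\sim x+y$ on the degenerating elliptic piece as a $2$-torsion condition in $\mathrm{Pic}^0(F)$. Once transversality is established, the multiplicities $\alpha=\beta=\gamma=\delta=1$ are determined and rearranging \eqref{abgd} yields the stated formula.
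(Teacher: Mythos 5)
Your proposal is correct and follows essentially the same route as the paper: it sets up the coefficient equation \eqref{abgd} from the set-theoretic decomposition, determines $\alpha=\beta=1$ and $4\gamma+\delta=5$ by pushing forward along $\pi_1$ and invoking Lemma \ref{pi1*pi2}, and then pins down $\gamma=\delta=1$ with the same test family over $Y\times F$ via the relation $27=-9\beta+36\gamma$. No substantive differences to report.
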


\subsection{The boundary components}
It remains to compute the classes of $\Xi_1$, $\Xi_2$, and $\Theta$. Recall the classes $\gamma_{1:J}$ defined in the introduction.

\begin{lem}
\label{xitheta}
The following equalities hold in $A^3(\BM_{2,3})$
\begin{eqnarray*}
\left[ \Xi_i \right] &=& \left( 2\psi_i - \delta_{1:\{i\}} \right)\cdot \gamma_{1:\{i\}}   \quad \quad \mbox{for $i=1,2$,} \\
\left[ \Theta \right] &=& \gamma_{1:\emptyset}\cdot (\psi_1-\delta_{0:\{1,3\}})= \gamma_{1:\emptyset}\cdot (\psi_2-\delta_{0:\{2,3\}}). 
\end{eqnarray*}
\end{lem}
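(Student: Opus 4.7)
Each of $\Xi_i$ and $\Theta$ is a codimension-one sublocus of a codimension-two boundary stratum ($\Gamma_{1:\{i\}}$ and $\Gamma_{1:\emptyset}$, respectively). The plan is to express each class as the intersection of the stratum class with a divisor class on $\BM_{2,3}$, and verify the formulas by pulling back along the gluing maps that parametrize the two strata.

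For $\Xi_i$, consider the gluing map $\iota \colon \BM_{1,3} \times \BM_{0,4} \to \BM_{2,3}$ which identifies two pairs of marked points to form the two nodes and is generically $2$-to-$1$ onto $\Gamma_{1:\{i\}}$. Under this parametrization $\Xi_i$ pulls back to $W \times \BM_{0,4}$, where $W \subset \BM_{1,3}$ is the double ramification divisor $\{2p_i \sim x + y\}$. I would first compute $\iota^*\psi_i = \psi_{p_i}$ (since $p_i$ lies on the elliptic factor) and $\iota^*\delta_{1:\{i\}} = \delta_{0:\{x,y\}}$: the only degeneration of the $\gamma$-configuration that yields an elliptic tail containing only $p_i$ is when the elliptic component $E$ breaks off a rational bridge carrying $x$ and $y$. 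Hence $\iota^*(2\psi_i - \delta_{1:\{i\}}) = 2\psi_{p_i} - \delta_{0:\{x,y\}}$. I would then verify $[W] = 2\psi_{p_i} - \delta_{0:\{x,y\}}$ via test curves in $\BM_{1,3}$: fixing $E_0, p_i, x$ and varying $y$ on $E_0$ yields $W \cdot B = 1$ and $(2\psi_{p_i} - \delta_{0:\{x,y\}}) \cdot B = 2 \cdot 1 - 1 = 1$ (using $\psi_{p_i} = \omega_{p_i} + \delta_{0:\{p_i,x\}} + \delta_{0:\{p_i,y\}}$); varying $p_i$ with $E_0, x, y$ fixed yields $W \cdot B = 4$ (the four solutions of $2p_i = x + y$ under multiplication by $2$) and the right-hand side also gives $2 \cdot 2 = 4$. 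The projection formula, together with the factor $2$ coming from the degree of $\iota$ onto its image, then yields the formula for $[\Xi_i]$.

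For $\Theta$, an analogous argument uses the gluing map $\kappa \colon \BM_{1,2} \times \BM_{0,5} \to \BM_{2,3}$ onto $\Gamma_{1:\emptyset}$. Here $\Theta$ corresponds to $\BM_{1,2} \times V$, where $V \subset \BM_{0,5}$ is the codimension-one locus on which the $g^1_2$ on the rational component with ramification at $p_1, p_2$ has $\{x,y\}$ as a fiber, i.e.\ $(x,y;p_1,p_2) = -1$. I would compute $\kappa^*\psi_1 = \psi_{p_1}$ and $\kappa^*\delta_{0:\{1,3\}} = \delta_{0:\{p_1,p_3\}}$ (from the unique splitting $R = R_1 \cup R_2$ with $\{p_1,p_3\} \subset R_1$ and $\{p_2,x,y\} \subset R_2$ that produces a rational tail marked by $\{p_1,p_3\}$), and verify $[V] = \psi_{p_1} - \delta_{0:\{p_1,p_3\}}$ on $\BM_{0,5}$ by a one-parameter test family varying $p_1$ with the other four points fixed. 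The symmetric formula in $p_2$ follows from the $p_1 \leftrightarrow p_2$ symmetry of the cross-ratio condition, which forces the identity $\psi_{p_1} - \delta_{0:\{p_1,p_3\}} = \psi_{p_2} - \delta_{0:\{p_2,p_3\}}$ in $\mathrm{Pic}(\BM_{0,5})$.

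The main technical obstacle is to ensure that the pullbacks of the boundary divisors under the two gluing maps pick up exactly the components asserted, with multiplicity one, and no others. This requires a careful enumeration of all degenerations of the $\gamma$-configurations lying in $\Delta_{1:\{i\}}$ (resp.\ $\Delta_{0:\{1,3\}}$) and a verification that they are transversal. Once this boundary analysis is in place, the remaining divisor class identities on $\BM_{1,3}$ and $\BM_{0,5}$ are pinned down by the test curve computations above.
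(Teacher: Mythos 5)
Your overall strategy is the same as the paper's: realize $\Xi_i$ and $\Theta$ as images, under the gluing maps $\BM_{1,3}\times\BM_{0,4}\to\Gamma_{1:\{i\}}$ and $\BM_{1,2}\times\BM_{0,5}\to\Gamma_{1:\emptyset}$, of products of a divisor on one factor with the full second factor, identify the pullbacks of $\psi_i$, $\delta_{1:\{i\}}$, $\delta_{0:\{i,3\}}$, and conclude by the projection formula (where, as you note, the degree-$2$ factor from swapping the two nodes appears on both sides and cancels). Your identifications $\iota^*\psi_i=\psi_{p_i}$, $\iota^*\delta_{1:\{i\}}=\delta_{0:\{x,y\}}$, and the description of $V\subset\BM_{0,5}$ as the harmonic cross-ratio locus $(x,y;p_1,p_2)=-1$ pulled back from $\BM_{0,4}$ are all correct and match the paper's.

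The one genuine gap is the claim that the divisor class identities on the factors are ``pinned down by the test curve computations above.'' They are not: ${\rm Pic}(\BM_{1,3})\otimes\mathbb{Q}$ has rank $5$, and your two test curves (both with fixed $j$-invariant) impose only two linear conditions; in particular neither detects the coefficient of $\delta_{\rm irr}$ (equivalently $\lambda$), nor the coefficient of $\delta_{0:\{p_i,x,y\}}$, both of which must be shown to vanish for $[W]=2\psi_{p_i}-\delta_{0:\{x,y\}}$ to hold. (A pencil of plane cubics through $p_i,x,y$, for instance, is needed to see the $\delta_{\rm irr}$ coefficient; it gives $2$ on both sides, consistent with the formula.) The paper sidesteps this by citing the known class of the divisor $\{2p_i\sim x+y\}$ in ${\rm Pic}(\BM_{1,3})$. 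The same criticism applies to verifying $[V]=\psi_{p_1}-\delta_{0:\{p_1,p_3\}}$ on $\BM_{0,5}$ by a single test family, but there your own observation that $V=\pi_3^{-1}(\mathrm{pt})$ already gives the class for free, since a point on $\BM_{0,4}\cong\mathbb{P}^1$ has class $\psi_{p_1}$ and $\pi_3^*\psi_{p_1}=\psi_{p_1}-\delta_{0:\{p_1,p_3\}}$ is the standard comparison formula -- this is exactly the paper's argument, and it also yields the symmetric expression in $p_2$ without appealing to a symmetry heuristic. So the $\Theta$ half is complete once you phrase it that way; the $\Xi_i$ half needs either a citation for $[\overline D_i]$ or a computation against a full basis of one-cycles (or the admissible-covers/ramification-divisor formula), not just the two curves given.
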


\begin{proof}
Consider the divisor $D_i$ of curves $[E,p_i,x,y]$ in $\MM_{1,3}$ such that $2p_i\sim x+y$. From \cite[\S 2.6]{MR1765541} or \cite[Proposition 3.1]{MR3231020}, one has $[\overline{D}_i] =2\psi_i-\delta_{1:\{i\}}$ in ${\rm Pic}(\BM_{1,3})$.
The locus $\Xi_i$ is the push-forward of $\overline{D}_i \times \BM_{0,4}\subset \BM_{1,3}\times \BM_{0,4}$ via the natural map $\BM_{1,3}\times \BM_{0,4}\rightarrow \Gamma_{1:\{i\}}\subset \BM_{2,3}$. 

Similarly, consider the map $\xi\colon \BM_{1,2}\times \BM_{0,5}\rightarrow\Gamma_{1:\emptyset}\subset\BM_{2,3}$ defined as 
\[
([E,x_1,y_1],[R,p_1,p_2,p_3,x_2,y_2]) \mapsto [E\cup_{x_1\sim x_2, y_1\sim y_2} R, p_1,p_2,p_3].
\]
The locus $\Theta$ is the push-forward via $\xi$ of the locus $\BM_{1,2}\times \pi_3^*({\rm point})\subset \BM_{1,2}\times \BM_{0,5}$, where $\pi_3\colon \BM_{0,5}\rightarrow \BM_{0,4}$ is the map obtained by forgetting the point $p_3$. The statement follows.
\end{proof}

\noindent From (\ref{Hyp21}), (\ref{Hyp22}), and Lemma \ref{xitheta}, 
the resulting expression in Proposition \ref{Hyp23-1} gives the statement in Theorem~\ref{hyp23class}.

\bibliographystyle{alpha}
\bibliography{Biblio.bib}

\end{document}